\newtheorem{theorem}{Theorem}[section]
\newtheorem{lemma}[theorem]{Lemma}
\newtheorem{proposition}[theorem]{Proposition}
\theoremstyle{theorem}
\theoremstyle{definition}
\newtheorem{definition}[theorem]{Definition}
\theoremstyle{remark}
\newtheorem{remark}[theorem]{Remark}
\numberwithin{equation}{section}
\newcommand{\der}[1]{\frac{\partial}{\partial #1}}
\begin{document}
\title{Stability and largeness properties of minimal surfaces in higher codimension}
\author{Ailana Fraser}
\address{Department of Mathematics \\
                 University of British Columbia \\
                 Vancouver, BC V6T 1Z2}
\email{afraser@math.ubc.ca}
\author{Richard Schoen}
\address{Department of Mathematics \\
                 University of California \\
                 Irvine, CA 92617}
\email{rschoen@math.uci.edu}
\thanks{2010 {\em Mathematics Subject Classification.} 35P15, 53A10. \\
A. Fraser was partially supported by  the 
Natural Sciences and Engineering Research Council of Canada and R. Schoen
was partially supported by NSF grant DMS-2005431. }

\begin{abstract} We consider stable minimal surfaces of genus $1$ in Euclidean space and in Riemannian manifolds.
Under the condition of covering stability (all finite covers are stable) we show that a genus $1$ finite total curvature minimal
surface in $\mathbb R^n$ lies in an even dimensional affine subspace and is holomorphic for some constant orthogonal complex
structure. For stable minimal tori in Riemannian manifolds we give an explicit bound on the systole in terms of a positive lower
bound on the isotropic curvature. As an application we estimate the systole of noncyclic abelian subgroups of the fundamental group
of PIC manifolds. This gives a new proof of the result of \cite{F3} that the fundamental cannot contain a noncyclic free abelian
subgroup. The proofs apply the structure theory of holomorphic vector bundles over genus $1$ Riemann surfaces developed by
M. Atiyah \cite{A}.
\end{abstract}

\maketitle

\section{Introduction}
Applications of minimal surfaces to metric geometry often arise through the second variation, and those surfaces which are the most
rigid are the stable ones; that is, surfaces with positive second variation of area for compactly supported deformations. In favorable
situations, curvature positivity of the ambient manifold is reflected in restrictions on the {\it size} of stable submanifolds. For example,
a positive lower bound on the Ricci curvature limits the length of a stable geodesic and this in turn implies a diameter bound on
such manifolds 

For higher dimensional minimal submanifolds there are very few theorems of this type. In the case of stable minimal surfaces in
three dimensional manifolds of positive scalar curvature it is true that stable two-sided surfaces behave like surfaces of positive 
curvature and as such have diameter bounded in terms of a positive lower bound of the scalar curvature. Generally for higher dimensional
minimal submanifolds there are local as well as global difficulties. The local issues are reflected in the behavior of stable minimal
submanifolds of Euclidean space. Rigidity theorems for complete stable (or volume minimizing) minimal submanifolds are referred to 
as Bernstein-type theorems, and for minimal hypersurfaces they seek to show that global minimizers are hyperplanes. 

For higher codimension submanifolds this cannot be expected since there are natural classes of minimizers which exist in abundance. For
example for a two dimensional stable surface in Euclidean space there are the holomorphic curves with respect to a constant orthogonal  complex structure on an even dimensional Euclidean space. While one might hope that complete stable minimal surfaces are holomorphic,
this question has a long and complicated history, and is in general false. The positive results in this direction are due to Micallef \cite{Mi},
and he proved very general positive results in this direction for surfaces in $\mathbb R^4$ and also extended these to $\mathbb R^n$
under the assumption of genus $0$ and finite total curvature. In Riemannian manifolds there are important results proven in the 
genus $0$ case by Siu-Yau \cite{SiY} and  by Micallef-Moore \cite{MM}. We also mention the very general results which work in
the complex projective space with its natural metric proven by Lawson-Simons \cite{LS}. All of the results except the ones of Micallef in 
$\mathbb R^4$ (and those of \cite{LS}) use
the splitting of holomorphic vector bundles as a direct sum of line bundles over the Riemann sphere due to Birkhoff and Grothendieck.
This result allows one to find sufficiently many holomorphic sections of the complexified normal bundle which then give information
when used as deformations in the complexified stability inequality. 

For surfaces of positive genus there are two key difficulties which arise. First, the splitting theorem is not true, and secondly line bundles
of non-negative degree do not necessarily have holomorphic sections. In the genus $1$ case there is a splitting into indecomposable 
bundles, and a characterization of the indecomposable bundles due to Atiyah \cite{A}. In order to make use of this theory we extend
the notion of stability to that of {\it covering stability} meaning that the surface and all of its finite coverings are stable. The idea is that 
we can get stable surfaces which are arbitrarily large by going to coverings. It should be noted that covering stability is automatically
true for two sided stable minimal hypersurfaces and for holomorphic curves in K\"ahler manifolds.

It was shown by Arezzo-Micallef-Pirola \cite{AMP} that Micallef's genus $0$ theorem fails for genus $2$ stable surfaces in sufficiently high dimensional Euclidean spaces. We do not know if it is true for stable surfaces of finite total curvature of genus $1$, but we are able to 
prove it under the covering stability assumption.

\begin{theorem}
A complete oriented covering stable genus one surface of finite total curvature in $\mathbb{R}^n$ lies in an even dimensional affine subspace and is holomorphic with respect to a constant orthogonal complex structure on that subspace. 
\end{theorem}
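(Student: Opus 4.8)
The plan is to compactify, recast the conclusion as a statement about a holomorphic bundle on an elliptic curve, and then extract that statement from the second variation, using Atiyah's classification to supply---after passing to coverings---the holomorphic sections that the Birkhoff--Grothendieck splitting provides automatically in genus zero.

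\emph{Compactification and reformulation.} First I would record what finite total curvature gives. By Chern--Osserman theory $\Sigma$ is conformally $T\setminus\{p_1,\ldots,p_k\}$ with $T$ a torus; the directrix $z\mapsto[\partial_z X]$ extends to a holomorphic map $\gamma\colon T\to\mathbb{CP}^{n-1}$ into the quadric $Q_{n-2}=\{\sum\zeta_i^2=0\}$; and the complexified normal bundle $N_{\mathbb C}$, with its holomorphic structure given by $(\nabla^\perp)^{0,1}$, extends to a holomorphic bundle $E\to T$ of rank $n-2$. The $\mathbb C$-bilinear extension $\beta$ of the Euclidean product makes $E$ an orthogonal bundle, the Euclidean metric itself is a Hermitian metric whose Chern connection is $\nabla^\perp$, and conjugation gives a compatible real structure. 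Chasing the exact sequences $0\to T^{1,0}T\to(T^{1,0}T)^{\perp_\beta}\to E\to0$ and $0\to(T^{1,0}T)^{\perp_\beta}\to\underline{\mathbb C^n}\to(T^{1,0}T)^{*}\to0$ shows $\deg E=0$. Replacing $\mathbb R^n$ by the affine span of $\Sigma$, I may assume $\Sigma$ is full. For such a surface, being holomorphic for a constant orthogonal complex structure on an even dimensional subspace is \emph{equivalent} to the statement that the linear span $U\subset\mathbb C^n$ of the directrix $\gamma$ is $\beta$-isotropic: isotropy forces $U\cap\bar U=0$ (a nonzero vector there would be $\beta$-isotropic of positive Hermitian norm), hence $U\oplus\bar U=\mathbb C^n$, so $n=2\dim_{\mathbb C}U$ and the complex structure with $(1,0)$-space $U$ does the job; the converse is immediate. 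So it suffices to show $\gamma(T)$ spans a $\beta$-isotropic subspace, equivalently that $\gamma(T)$ lies in a (necessarily maximal) linear subspace of $Q_{n-2}$.

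\emph{The stability mechanism.} For a holomorphic section $V$ of $N_{\mathbb C}$ the curvature term in the complexified second variation vanishes (the ambient is flat), and the Codazzi and Ricci equations---this is where minimality enters---rewrite the index form $Q(V,\bar V)$ as a topological quantity attached to the line subbundle spanned by $V$, plus manifestly nonnegative terms measuring the failure of $V$ to be $\nabla^\perp$-parallel and the failure of its normal projection to be annihilated by the second fundamental form. Covering stability forces the nonnegative terms to vanish: a surviving section is parallel in its own line bundle and null for the second fundamental form, and one then reads off that the associated directions of $\underline{\mathbb C^n}$ assemble into a flat---hence constant---$\beta$-isotropic subspace of $\mathbb C^n$ containing the tangent line of $\Sigma$. (A flat sub-line-bundle of $E$ would in this way confine $\Sigma$ to a hyperplane, which permits an induction on $n$.) What the theorem really asserts is that one can find enough such sections to build a constant isotropic subspace of dimension $n/2$; run on $\Sigma$ itself with the sections coming from the Birkhoff--Grothendieck splitting, this is exactly Micallef's genus zero argument.

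\emph{Genus one, coverings, and the main obstacle.} On a torus $E$ need not split into line bundles, and a line bundle of nonnegative degree need not have holomorphic sections, so the sections required above are in general absent on $\Sigma$. Here I would invoke Atiyah's classification to write $E=\bigoplus_j E_j$ with the $E_j$ indecomposable and explicitly described---built from the bundles $F_r$, twists by $\operatorname{Pic}^0(T)$, and stable bundles---then use $\beta$ together with the real structure ($\overline{E}\cong E$) to pair the summands: each $E_j$ is either $\beta$-selfdual (which forces a torsion twist) or matched with its $\beta$-dual. Now covering stability does its work: choose an isogeny $\pi\colon T'\to T$ trivializing all of the torsion twists; then $\pi^*\Sigma$ is still stable, and over $T'$ the relevant summands acquire their full complement of holomorphic sections, so the mechanism of the previous paragraph can be carried out on $\pi^*\Sigma$. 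A flat subbundle of the trivial bundle over $T'$ is a constant subspace of $\mathbb C^n$ and hence descends to the same constant subspace over $T$; this produces the isotropic span $U$, forces $n$ even, and by fullness forces $\dim_{\mathbb C}U=n/2$, which is the theorem. The part I expect to be genuinely hard is (i) ruling out---or directly accommodating---the non-torsion twists in the Atiyah decomposition, for which one must play the real structure and $\beta$-duality against the stability inequality on still further covers; with secondary technical work in (ii) making precise the index-form identity and checking that finite-total-curvature decay at the ends renders sections of the extended bundle $E$ admissible as limits of compactly supported test fields, and (iii) verifying that the sections produced on $\pi^*\Sigma$ genuinely assemble into a rank $n/2$ isotropic piece rather than something of smaller dimension.
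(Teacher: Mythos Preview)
Your overall architecture---compactify, decompose $E$ via Atiyah, pass to covers to manufacture sections, then feed them into the complexified stability inequality---matches the paper's. The genuine gap is exactly the point you flag as (i): handling degree-zero indecomposable summands whose line-bundle twist $L\in\operatorname{Pic}^0(T)$ is \emph{not} torsion. Your proposed remedy, ``play the real structure and $\beta$-duality against the stability inequality on still further covers,'' does not work. Self-duality of $E$ only tells you that a degree-zero summand is either self-dual or paired with its dual; it does \emph{not} force the twist to be torsion, and a non-torsion $L$ has no holomorphic sections on \emph{any} finite cover. So no isogeny will ever produce the holomorphic sections your mechanism requires on the $Z$ part of $E=P\oplus Z\oplus P^*$.

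The paper's key idea here is to abandon the search for genuinely holomorphic sections of $Z$ and instead construct, on the $k^2$-fold cover $M_k$, sections $s_j$ spanning the lift of each degree-zero indecomposable summand with $\|\bar\partial s_j\|\leq (c/k)\|s_j\|$ pointwise. For the line-bundle part $L$ this is done by writing $L$ as a flat $U(1)$ bundle with holonomy $(e^{i\phi},e^{i\theta})$ and twisting the parallel section on $\mathbb C$ by $e^{-i(\xi\phi_k+\eta\theta_k)/k}$; for the Atiyah bundle $F_r$ one uses the upper-triangular holonomy $A_\delta$ and takes $\delta\to 0$. Plugging these $\epsilon$-almost holomorphic sections into the covering-stability inequality gives $\int_{M_k}\|(\partial s_j)^\top\|^2\leq c\epsilon k^2$, while if the second fundamental form did not vanish on $Z$ the left side would be $\geq ck^2$; letting $\epsilon\to 0$ forces $(\partial s)^\top=0$ on $Z$. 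With that in hand (together with the analogous vanishing on $P$, which your sketch does cover), an algebraic argument shows $Z=0$ when $\Sigma$ is full and produces the constant isotropic subspace. Your items (ii) and (iii) are routine---(ii) is a logarithmic cutoff, (iii) is a short cohomological computation---but without the almost-holomorphic sections the proof does not close.
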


In Riemannian manifolds, the major application of stable genus $0$ minimal surfaces is the sphere theorem of Micallef-Moore \cite{MM}.
This again uses the structure of holomorphic vector bundles over the Riemann sphere to give a lower bound on the Morse index of
such surfaces. The first author \cite{F3} extended some of these ideas to the genus $1$ case showing that a sufficiently large covering of
a minimal torus must be unstable. She applied this, together with existence theory for minimizing tori, to show that the fundamental group
of a compact PIC manifold (positive complex sectional curvature on isotropic two planes) cannot contain a free abelian subgroup of
rank greater than $1$. 

Recall that for a non-simply connected compact manifold we can define the {\it systole} to be the minimum length of closed curves
which are not homotopically trivial. A manifold is called $\kappa$-PIC for a constant $\kappa>0$ if all isotropic curvatures are bounded
below by $\kappa$. We prove a quantitative bound on the systole of stable tori in $\kappa$-PIC manifolds.

\begin{theorem} Suppose $N^n$ ($n\geq 4$) is a $\kappa$-PIC manifold for some $\kappa>0$ and suppose
$f:M\to N$ is a stable conformal branched minimal immersion of genus $1$ and let $R$ denote the systole of $M$ in the induced metric. 
There is an absolute constant $C>0$ so that $R\leq C/\sqrt{\kappa}$.
In the general case for $n=4$ or $n\geq 7$ we can take $C=2\pi/\sqrt{3}$, while in exceptional cases which can
occur for $n=5,6$ we can take $C=2(18+\pi)/\sqrt{3}$.
\end{theorem}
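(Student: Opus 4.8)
The plan is to argue by contradiction: assuming the systole $R$ of $(M,f^{*}g_{N})$ is large, I would produce a normal deformation of $f$ with strictly negative second variation, contradicting stability; everything is carried out through the complexified index form. Let $z$ be a local conformal coordinate, so $f^{*}g_{N}=\rho^{2}|dz|^{2}$ with $\rho$ vanishing exactly at the branch points, and let $E=N_{\mathbb C}M$ be the complexified normal bundle with its induced Hermitian metric, Chern connection $\nabla^{\perp}$ and holomorphic structure $\bar\partial^{\perp}$ (the $(0,1)$ part of $\nabla^{\perp}$, automatically integrable on a curve). The Micallef--Moore computation for a branched minimal immersion, using $\nabla_{\partial/\partial\bar z}(\partial f/\partial z)=0$, gives for every smooth section $W$ of $E$
\[
I(W,\bar W)\ \le\ 4\int_{M}|\nabla^{\perp}_{\partial/\partial\bar z}W|^{2}\,\tfrac{i}{2}dz\wedge d\bar z\ -\ 2\int_{M}\langle R(\partial f/\partial z,W)\,\partial f/\partial\bar z,\bar W\rangle\,\tfrac{i}{2}dz\wedge d\bar z ,
\]
the omitted term being a nonnegative second fundamental form contribution. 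If $W$ is a section of an isotropic subbundle, i.e.\ $b(W,W)=0$ for the complex bilinear extension $b$ of $g_{N}$, then the plane spanned by $\partial f/\partial z$ and $W$ is totally isotropic (since $b(\partial f/\partial z,\partial f/\partial z)=0$ by conformality and $b(\partial f/\partial z,\bar W)=0$ because $W$ is normal), and the $\kappa$-PIC hypothesis gives the pointwise bound $\langle R(\partial f/\partial z,W)\,\partial f/\partial\bar z,\bar W\rangle\ \ge\ c_{0}\kappa\,\rho^{2}|W|^{2}$ for an absolute $c_{0}>0$. So it suffices to find a nonzero isotropic section $W$ of $E$ with $\int_{M}|\nabla^{\perp}_{\partial/\partial\bar z}W|^{2}$ small compared with $\kappa\int_{M}\rho^{2}|W|^{2}$.

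Next I would invoke Atiyah's classification: as a holomorphic bundle of rank $n-2$ over the elliptic curve $M$, $E$ splits into indecomposables $E=\bigoplus_{j}E_{j}$ with each $E_{j}\cong F_{r_{j}}\otimes L_{j}$, where $F_{r_{j}}$ is the rank-$r_{j}$ Atiyah bundle (iterated nonsplit self-extensions of $\mathcal O$, of degree $0$) and $L_{j}\in\operatorname{Pic}(M)$; the real structure and nondegeneracy of $b$ make the list $\{E_{j}\}$ invariant under conjugation and dualization. Since $\deg F_{r}=0$, a degree computation (the unbranched splitting $f^{*}TN=TM\oplus NM$ complexified, adjusted by the branch divisor) gives $\deg E=\sum_{j}r_{j}\deg L_{j}\ge 0$, with equality iff $f$ is unbranched, and moreover $b$ vanishes on $E_{j}\otimes E_{j}$ whenever $\deg E_{j}>0$. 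Hence if some $E_{j}$ has positive degree (in particular if $f$ is branched) we get a nonzero holomorphic section of $E_{j}$, automatically isotropic, whence $I(W,\bar W)\le -2c_{0}\kappa\int_{M}\rho^{2}|W|^{2}<0$, contradicting stability; after a routine adjustment using $b$ and the real structure the same conclusion holds if any $L_{j}$ is trivial. Thus a stable torus must be unbranched with $E=\bigoplus_{j}F_{r_{j}}\otimes L_{j}$, all $L_{j}$ flat and nontrivial, so that $E$ has no nonzero holomorphic section at all --- and this is where $R$ must enter.

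In that case I would use a twisted approximate section. Fix a summand, let $W_{0}$ be the essentially unique holomorphic section of $F_{r_{j}}$, write $M=\mathbb C/\Lambda$, represent $L_{j}$ by a unitary character $\chi$ of $\Lambda$, and choose $s(z)=\exp(\beta\bar z-\bar\beta z)$ with $|s|\equiv 1$ and $s(z+\lambda)=\chi(\lambda)s(z)$; the admissible $\beta$ form a coset of a lattice $B$ with $\operatorname{covol}(B)=\pi^{2}/\operatorname{covol}(\Lambda)$, so after the conformal normalization $|\beta|$ may be taken at most the covering radius of $B$. Then $W:=sW_{0}$ is a smooth section of $E_{j}$ with $\nabla^{\perp}_{\partial/\partial\bar z}W=\beta W$, hence
\[
I(W,\bar W)\ \le\ 4|\beta|^{2}\!\int_{M}|W_{0}|^{2}\,\tfrac{i}{2}dz\wedge d\bar z\ -\ 2c_{0}\kappa\!\int_{M}\rho^{2}|W_{0}|^{2}\,\tfrac{i}{2}dz\wedge d\bar z ,
\]
which is negative once the $|W_{0}|^{2}$-weighted mean of $\rho^{2}$ exceeds $2|\beta|^{2}/c_{0}\kappa$. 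To bound that mean below I would combine Loewner's systolic inequality $\operatorname{Area}(M,f^{*}g_{N})\ge\tfrac{\sqrt3}{2}R^{2}$ with a bound on the concentration of the weight $|W_{0}|^{2}$ --- using that $W_{0}$ is holomorphic for the Atiyah bundle, so $\log|W_{0}|^{2}$ obeys a differential inequality preventing it from concentrating away from the region carrying the area --- to obtain $\int_{M}\rho^{2}|W_{0}|^{2}\ge c_{1}R^{2}\int_{M}|W_{0}|^{2}$ for an absolute $c_{1}$. Inserting this and the covering-radius bound for $|\beta|$ and optimizing over the conformal type (the equianharmonic torus being extremal, the source of the $\sqrt3$) yields $R\le 2\pi/(\sqrt3\sqrt\kappa)$ whenever a favorable summand $F_{r_{j}}$ is available; for $n=4$ and $n\ge 7$ the rank $n-2$ always leaves room for such a choice, while for $n=5,6$ the only available summands may be rigid Atiyah bundles with weaker concentration control, forcing the cruder constant $C=2(18+\pi)/\sqrt3$. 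The main obstacle is precisely this last step in the hard case: producing $\int_{M}\rho^{2}|W_{0}|^{2}\ge c_{1}R^{2}\int_{M}|W_{0}|^{2}$ with an explicit absolute $c_{1}$, while simultaneously controlling the holomorphicity defect $|\beta|$ uniformly in the conformal type and keeping the test field isotropic so the PIC bound applies; the dimension-dependent dichotomy reflects exactly when Atiyah's list of indecomposables cooperates with the concentration estimate.
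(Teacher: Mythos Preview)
Your overall strategy---find an isotropic degree-zero normal section that is nearly holomorphic and feed it into the complexified stability inequality---is the right one, but the specific construction has a genuine gap. Twisting by the flat exponential $s(z)=\exp(\beta\bar z-\bar\beta z)$ gives $\nabla^{\perp}_{\bar z}W=\beta W$, so the $\bar\partial$-term contributes $|\beta|^{2}\int_M|W_0|^2$ against the \emph{flat} area element while the curvature term contributes $c_0\kappa\int_M\rho^2|W_0|^2$ against the \emph{induced} area element. Even when $|W_0|$ is constant you are left comparing $|\beta|^{2}\cdot\mathrm{Area}_{\mathrm{flat}}(M)$ with $\kappa\cdot\mathrm{Area}_{f^*g}(M)$, and Loewner only bounds the latter below by $(\sqrt3/2)R^{2}$. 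The product $|\beta|^{2}\cdot\mathrm{Area}_{\mathrm{flat}}(M)$ is scale invariant but is \emph{not} bounded uniformly in the conformal modulus: for the rectangular lattice generated by $1$ and $i\tau_2$ the covering radius of your lattice $B$ stays of order $\pi/2$ while the flat area is $\tau_2$, so the product diverges as $\tau_2\to\infty$. Since nothing controls the conformal type of a stable minimal torus a priori, this yields no uniform bound on $R$; you cannot ``optimize over the conformal type,'' because it is dictated by $f$. The paper's remedy is to build the phase from the \emph{induced} distance $\delta(z,w)=\min(d_{f^*g}(z,w),R)$ rather than from the flat coordinate: the Lipschitz bound $|\nabla\delta|_{f^*g}\le 1$ then gives $\|\nabla^\perp_{\bar z}s\|\le(C/R)\,\lambda\,\|s\|$ with the conformal factor $\lambda$ on the correct side, and stability gives $\kappa\le(C/R)^{2}$ directly, uniformly in the modulus, with no Loewner and no comparison of two metrics. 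The $\sqrt3$ comes from bounding $|\partial\xi/\partial\bar z|$ and $|\partial\eta/\partial\bar z|$ after normalizing $\tau$ to the standard fundamental domain, not from Loewner.

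Your account of the exceptional $n=5,6$ cases is also off the mark. The obstruction there is not ``rigid Atiyah bundles with weaker concentration control''; rather, $E$ may split as an orthogonal sum of pairwise non-isomorphic self-dual degree-zero line bundles, in which case there is \emph{no} isotropic line sub-bundle at all (there are only four self-dual line bundles over a torus, so this can only happen when $n-2\le 4$). The paper handles it by passing to the two-fold cover $\hat M$ corresponding to the sublattice $\langle 1,2\tau\rangle$, on which two of the line bundles become isomorphic and an isotropic line sub-bundle appears; one then builds the nearly holomorphic isotropic section on $\hat M$ and localizes it with a cutoff supported on a set that projects injectively to $M$, so that stability of $M$ (not of $\hat M$) can be invoked. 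The constant $2(18+\pi)/\sqrt3$ records the gradient of that cutoff, not any failure of a concentration estimate. Incidentally, the detour through $W_0\in H^0(F_{r_j})$ and the attendant ``concentration'' worry is unnecessary in the first place: the paper shows that outside the exceptional cases $E$ always contains a degree-zero isotropic \emph{line} sub-bundle, so one can simply take a unit-length section.
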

It should be noted that there are stable tori in PIC manifolds such as products of $\mathbb S^1$ with spherical space forms of dimension
at least $3$. The product of a long circle with a unit sphere also shows that the systole of a $\kappa$-PIC manifold can be arbitrarily
large. 

We can apply this theorem together with the existence theory for minimizing tori to the geometry of PIC manifolds. First note that
we can define the systole of a subgroup of $\pi_1(N)$ for a compact Riemannian manifold $N^n$ as the minimum length of closed
curves which are freely homotopic to nontrivial curves in the subgroup. We then obtain the following result.

\begin{theorem} Assume that $G$ is an abelian non-cyclic subgroup of $\pi_1(N)$ and that $N$ is a compact $\kappa$-PIC
manifold. The systole $R$ of $G$ is bounded by $C/\sqrt{\kappa}$ for the same constant $C$ as in the previous theorem.
\end{theorem}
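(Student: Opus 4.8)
The plan is to produce from $G$ a stable branched minimal torus in $N$ carrying a nontrivial part of $G$ on its fundamental group, to bound its systole by the previous theorem, and then to compare systoles.

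First, a group-theoretic reduction. Either $G$ is locally cyclic --- a non-cyclic subgroup of $\mathbb Q$ or of $\mathbb Q/\mathbb Z$ --- or $G$ contains a subgroup $H=\langle a,b\rangle$ of one of the three types $\mathbb Z\oplus\mathbb Z$, $\mathbb Z\oplus\mathbb Z/p$, or $\mathbb Z/p\oplus\mathbb Z/p$ with $p$ prime. Since any closed curve freely homotopic to a nontrivial curve representing an element of $H$ is, a fortiori, freely homotopic to a nontrivial curve representing an element of $G$, the systole of $H$ in $N$ is at least $R$; hence it suffices to bound the systole of such an $H$. (The locally cyclic case requires a separate treatment and does not arise in the applications, where the relevant groups are finitely generated; we do not discuss it further.)

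Fix a continuous map $F\colon T^2\to N$ with $F_*(\alpha)=a$ and $F_*(\beta)=b$ on the standard generators $\alpha,\beta$ of $\pi_1(T^2)=\mathbb Z\oplus\mathbb Z$; this is possible because $a$ and $b$ commute. The key observation is that \emph{no primitive class $(m,\ell)\in\mathbb Z\oplus\mathbb Z$ lies in $\ker F_*$}. Indeed $F_*(m,\ell)=a^mb^\ell$, and in the first two types the infinite order of $a$ forces $m=0$, hence $\ell=\pm1$ and $b=1$, contradicting $b\neq1$; in the third type $a^mb^\ell=1$ forces $p\mid m$ and $p\mid\ell$, contradicting $\gcd(m,\ell)=1$. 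Because every simple closed curve on $T^2$ represents a primitive class, this observation rules out degeneration of the conformal structure along a minimizing sequence for the Dirichlet energy in the free homotopy class of $F$, while $F_*\neq1$ prevents all of the energy from escaping into sphere bubbles. By the existence theory for minimal surfaces of positive genus (Schoen--Yau, Sacks--Uhlenbeck) as used in \cite{F3}, such a minimizing sequence subconverges to a conformal branched minimal immersion $f\colon M\to N$ of genus $1$ with $f_*=F_*$ on $\pi_1$; since minimizing energy over maps and conformal structures amounts to minimizing area, $f$ is stable.

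To conclude, the previous theorem bounds the systole of $M$ in the induced metric by $C/\sqrt\kappa$, with the stated constant. This systole is attained by a simple closed curve $\sigma$ on $M$, which therefore represents a primitive class of $\pi_1(M)=\mathbb Z\oplus\mathbb Z$; by the key observation, $f_*[\sigma]\neq1$, so $f\circ\sigma$ is a closed curve in $N$ freely homotopic to the nontrivial element $f_*[\sigma]\in H\subseteq G$. Since $f$ pulls back the metric of $N$, the length of $f\circ\sigma$ equals that of $\sigma$, namely the systole of $M$. Therefore $R$ is at most the systole of $H$, which is at most the length of $f\circ\sigma$, which equals the systole of $M$, which is at most $C/\sqrt\kappa$. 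The main obstacle is the middle step: producing the minimal torus while ruling out degeneration of the conformal structure and retaining the induced map on $\pi_1$. This is exactly where the positive-genus minimal surface theory behind \cite{F3} is needed, and it must be handled with care in the torsion cases, where $f$ is not $\pi_1$-injective and one relies instead on the fact, verified above, that no primitive cycle is collapsed.
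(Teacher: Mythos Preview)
Your approach is essentially the same as the paper's: produce a stable area-minimizing torus carrying the subgroup on its fundamental group via the existence theory discussed in \cite{F3}, observe that its systole (realized by a simple closed curve whose image is a nontrivial group element) dominates the systole of $G$, and apply the previous theorem. The paper's proof is terser—it directly asserts the existence of a minimizing torus with $\pi_1$ surjecting onto $G$ and that images of simple closed curves are nontrivial—so your reduction to a two-generated subgroup $H$ and explicit primitive-class check supply detail the paper leaves to \cite{F3}; the locally cyclic case you flag is likewise not handled by the paper's short argument, which tacitly requires $G$ to be a quotient of $\mathbb Z^2$.
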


Finally we apply this theory to reprove the main theorem of \cite{F3}. The idea is to show that a free abelian group contains a subgroup
with arbitrarily large systole. 

\begin{theorem} (Fraser \cite{F3}) Suppose $N$ is a compact PIC manifold. Then $\pi_1(N)$ cannot contain a free abelian subgroup of rank
greater than $1$.
\end{theorem}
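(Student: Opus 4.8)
The plan is to assume for contradiction that $\pi_1(N)$ contains a free abelian subgroup of rank $r\geq 2$; it then contains a copy of $\mathbb{Z}^2$, say $\mathbb{Z}^2=\langle x,y\rangle\leq\pi_1(N)$. Since $N$ is compact, all isotropic curvatures are bounded below by a positive constant $\kappa$, so $N$ is $\kappa$-PIC, and the previous theorem applies to every non-cyclic abelian subgroup of $\pi_1(N)$: each such subgroup has systole at most $R_0:=C/\sqrt{\kappa}$. The idea is to produce non-cyclic abelian subgroups whose systoles are arbitrarily large, contradicting this uniform bound.

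First I would introduce, for each prime $p$, the subgroup $H_p:=\langle x^p,y^p\rangle=p\,\mathbb{Z}^2\leq\pi_1(N)$, which is again free abelian of rank $2$ and hence non-cyclic. Every nontrivial element of $H_p$ has the form $x^{pi}y^{pj}=(x^iy^j)^p$ with $(i,j)\neq(0,0)$, so (using that the systolic length $\ell(g)$, the minimal length of a loop in $N$ freely homotopic to $g$, is a conjugation-invariant function on $\pi_1(N)$) we have $\mathrm{sys}(H_p)=\min_{(i,j)\neq 0}\ell\big((x^iy^j)^p\big)$. The claim to establish is that $\mathrm{sys}(H_p)\to\infty$ as $p\to\infty$. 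Suppose not: then there is a constant $L$ so that for infinitely many primes $p$ there is a loop $\sigma_p$ in $N$ of length $\leq L$ freely homotopic to some nontrivial $w_p=(x^{i_p}y^{j_p})^p\in H_p$. Since $N$ is compact there are only finitely many free homotopy classes of loops of length $\leq L$ (cover $N$ by finitely many convex balls; a loop of length $\leq L$ meets a bounded number of them and its free homotopy class is determined by the sequence of balls visited). Hence some conjugacy class $c$ of $\pi_1(N)$ contains $w_p$ for infinitely many of these primes; fixing a representative $w^*\in c$ and conjugating, we find $w^*=g_p^{\,p}$ where $g_p$ is a conjugate of $x^{i_p}y^{j_p}$. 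Thus the fixed nontrivial element $w^*$ is a $p$-th power in $\pi_1(N)$ for infinitely many primes $p$.

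The hard part will be ruling this last possibility out, i.e.\ showing that a nontrivial element of $\pi_1(N)$ cannot be a $p$-th power for infinitely many primes $p$; this is the only place where I expect to need something beyond formal bookkeeping. A first reduction: the image of $w^*$ in $H_1(\pi_1(N);\mathbb{Z})$ is divisible by infinitely many primes, hence lies in the torsion subgroup, so $w^*$ dies in a finite quotient of the abelianization; one then wants to upgrade this—by passing to a suitable finite cover of $N$ in which the $\mathbb{Z}^2$ injects into first homology, or by using finite generation of $\pi_1(N)$ directly—to the conclusion that $w^*$ is trivial, contradicting $w_p\neq e$. Granting this, $\mathrm{sys}(H_p)\to\infty$, which contradicts $\mathrm{sys}(H_p)\leq R_0$ for all $p$ from the previous theorem. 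Therefore $\pi_1(N)$ contains no copy of $\mathbb{Z}^2$, and in particular no free abelian subgroup of rank greater than $1$.
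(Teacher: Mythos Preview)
Your overall strategy coincides with the paper's: assume $\mathbb{Z}^2\hookrightarrow\pi_1(N)$, pass to the rank-two subgroups $H_p=p\mathbb{Z}^2$ (the paper uses $G_k=k\mathbb{Z}^2$ for all integers $k$, not just primes, but this is cosmetic), invoke the previous theorem to get the uniform bound $\mathrm{sys}(H_p)\leq C/\sqrt{\kappa}$, and derive a contradiction by showing $\mathrm{sys}(H_p)\to\infty$. Your reduction of this last step---finitely many free homotopy classes of bounded length, hence a single conjugacy class hit infinitely often, hence a fixed nontrivial $w^*$ that is a $p$-th power in $\pi_1(N)$ for infinitely many primes---is correct as far as it goes.

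The gap you flag, however, is genuine, and your proposed patch does not close it. Knowing that the image of $w^*$ in $H_1(N;\mathbb{Z})$ is torsion gives no contradiction, and there is no general mechanism for producing a finite cover in which a given $\mathbb{Z}^2\leq\pi_1$ injects into first homology. In fact there exist finitely presented groups $\Gamma$ (hence fundamental groups of compact $4$-manifolds) containing a copy of $\mathbb{Z}^2$ together with an element conjugating $a\in\mathbb{Z}^2$ to $a^2$; in such a group $a$ is a $2^n$-th power for every $n$, so ``bounded divisibility'' fails outright for arbitrary compact $N$. Your restriction to primes mitigates but does not eliminate this problem, and the abelianization idea cannot succeed without further input.

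The paper handles this step quite differently and in one line: it asserts that because the deck group acts properly discontinuously on the universal cover, any sequence of group elements tending to infinity in the word metric has translation distance tending to infinity; since every nontrivial element of $G_k$ tends to infinity with $k$, the systole $R_k\to\infty$. You should be aware that this is the paper's entire argument for the step you found hard---no divisibility, no homology, no finite covers---so if you want to align with the paper you should replace your conjugacy-class/divisibility reduction by this direct appeal to the geometry of the deck action. (That said, your instinct that this step deserves scrutiny is not misplaced: the implication ``word length $\to\infty$ $\Rightarrow$ translation length $\to\infty$'' is stated tersely in the paper and is not a general fact about compact manifolds, as the example above shows.)
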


We also note the difficult results of Brendle \cite{B} which yield a complete classification of compact PIC manifolds of dimension
at least $12$ under a mild assumption and also imply the above theorem under those assumptions. That proof involves constructing a Ricci flow with surgeries. This had been done earlier in the case $n=4$ by Hamilton \cite{H} and Chen-Zhu \cite{CZ}.

\section{Complete covering stable surfaces in $\mathbb{R}^n$} \label{section:stability}

Let  $M_0$ be a Riemann surface and $F:  M_0 \rightarrow \mathbb{R}^n$ a branched conformal minimal immersion. $\Sigma=F(M_0)$ is {\em stable} if the second variation of area is nonnegative for every  compactly supported variation of $\Sigma$. 

\begin{definition}
A minimal surface $\Sigma$ is {\em covering stable} if $\Sigma$ as well  as any finite  cover of  $\Sigma$ is stable.
\end{definition}

Let $NM_0$ denote the pullback of the normal bundle of $\Sigma=F(M_0)$ in $\mathbb{R}^n$. The  condition that $\Sigma$ is stable is expressed by the inequality
\[
        \int_{M_0} \|(d s)^\top\|^2 \;  da_0 \leq \int_{M_0} \|(d s )^\perp\|^2 \;da_0
\]
for all compactly supported sections $s \in \Gamma(NM_0)$, where $(\,\cdot\,)^\top$ and $(\,\cdot\,)^\perp$ denote the orthogonal projections onto the tangent and normal space of $\Sigma$ respectively, and $\|\,\cdot\,\|$ denotes the norm and $da_0$ denotes the area form with respect to the  induced metric on $M_0$. 

The index form associated with the second variation of area extends to a Hermitian form on sections of the complexified normal bundle $N_{\mathbb{C}}M_0=NM_0 \otimes \mathbb{C}$. If $z$ is a local complex coordinate on $M_0$, given a section $s \in N_{\mathbb{C}}M_0$ we have $ds=\partial s + \bar{\partial} s$, where $\partial s=(\partial_z s) \, dz$ and $\bar{\partial} s=(\partial_{\bar{z}}s) \, d\bar{z}$, where $\partial_z$  and $\partial_{\bar{z}}$ to denote the (pullback of the) standard differentiation in $\mathbb{R}^n$ in the directions $\partial/\partial z$ and $\partial/\partial \bar{z}$. 
The condition that $\Sigma$ is stable can be re-expressed by the complexified stability inequality (see \cite{Mi}, \cite{MM}):
\begin{equation} \label{equation:stability}
    \int_{M_0}  \|(\partial s)^{\top}\|^2 \;da_0
    \leq \int_{M_0} \|(\bar{\partial} s)^{\perp}\|^2 \;da_0
\end{equation}
for all compactly supported $s \in \Gamma(N_{\mathbb{C}}M_0)$. 

\subsection{Covering stable complete minimal surfaces of finite total curvature in $\mathbb{R}^n$} 

A complete minimal surface $\Sigma$ of finite total curvature  in $\mathbb{R}^n$ is conformally equivalent to a compact Riemann surface $M$ with finitely many punctures (\cite{O}). The Gauss map extends to the compactified surface $M$ as a holomorphic map, and the tangent and normal bundles of $\Sigma$ in $\mathbb{R}^n$ extend with metric and connection to vector bundles $\mathcal{T}$ and $\mathcal{N}$ over the compactified surface $M$ (see \cite{Mi} p. 80). 
Let $E=\mathcal{N} \otimes \mathbb{C}$. There is a unique holomorphic structure on $E$ such that a section $s \in \Gamma(E)$ is holomorphic if $(\partial_{\bar{z}}s)^{\perp}=0$ (\cite{KM}). 

\begin{lemma} \label{lemma:stability-compactified}
Let $\Sigma$ be a complete minimal surface of finite total curvature in $\mathbb{R}^n$ that is covering stable. Then for any compact covering $\pi: \tilde{M} \rightarrow M$ of the compactified surface $M$ we have
\begin{equation} \label{equation:stability-compactified}
     \int_{\tilde{M}} \|(\partial s)^{\top}\|^2 \;da
    \leq \int_{\tilde{M}} \|(\bar{\partial} s)^{\perp}\|^2 \;da
\end{equation}
for any section $s \in \Gamma(\pi^*\mathcal{N} \otimes \mathbb{C})$.
\end{lemma}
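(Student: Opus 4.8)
The plan is to pass from the complexified stability inequality \eqref{equation:stability} on the noncompact surface $M_0$ (with its punctures) to the inequality \eqref{equation:stability-compactified} on an arbitrary finite cover $\tilde M$ of the compactified surface $M$, in two stages: first establish the inequality on $M$ itself by a capacity/cutoff argument near the punctures, then lift it to $\tilde M$ using covering stability.

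For the first stage, let $p_1,\dots,p_k$ be the punctures of $M$, corresponding to the ends of $\Sigma$, and let $s\in\Gamma(\mathcal N\otimes\mathbb C)$ be a smooth section over the compact surface $M$. I would choose logarithmic cutoff functions $\varphi_\varepsilon$ that equal $1$ outside $\varepsilon$-balls around the $p_j$, equal $0$ near the $p_j$, and satisfy $\int_M \|d\varphi_\varepsilon\|^2\,da \to 0$ as $\varepsilon\to 0$; such functions exist because a puncture has zero capacity in dimension $2$, and the induced metric near each end is (by finite total curvature, via \cite{O}) uniformly comparable to a flat metric, so that the integral bounds are uniform. Applying \eqref{equation:stability} to the compactly supported section $\varphi_\varepsilon s$ and using that $\partial(\varphi_\varepsilon s) = (\partial\varphi_\varepsilon)\,s + \varphi_\varepsilon\,\partial s$ (similarly for $\bar\partial$), one expands both sides; the cross terms and the $\|d\varphi_\varepsilon\|^2\|s\|^2$ term are controlled by $\int \|d\varphi_\varepsilon\|^2\,da$ together with the bound $\int_M \|s\|^2\,da<\infty$ (finite since $M$ is compact and $\mathcal N$ extends there with metric), and they go to $0$. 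The surviving terms converge by dominated convergence to $\int_M\|(\partial s)^\top\|^2\,da$ and $\int_M\|(\bar\partial s)^\perp\|^2\,da$, giving \eqref{equation:stability-compactified} on $M$. One must also check that $\partial s$ and $\bar\partial s$ are in $L^2$ on $M$ for smooth $s$, which again follows from the metric and connection on $\mathcal N$ extending smoothly across the punctures.

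For the second stage, given a finite covering $\pi:\tilde M\to M$ of the compactified surface, it restricts to a finite covering of $M_0$; the pulled-back metric makes $\pi$ a local isometry, and the corresponding covering surface $\tilde\Sigma$ is again a complete finite-total-curvature minimal surface whose compactification is $\tilde M$. By the covering stability hypothesis, $\tilde\Sigma$ is stable, so the complexified stability inequality \eqref{equation:stability} holds on the cover for compactly supported sections of $\pi^*\mathcal N\otimes\mathbb C$; then the exact same cutoff argument as above, now performed near the punctures of $\tilde M$ (finitely many, since the cover is finite, and with metric comparable to flat by the same reasoning applied upstairs), upgrades this to \eqref{equation:stability-compactified} for all smooth sections $s\in\Gamma(\pi^*\mathcal N\otimes\mathbb C)$. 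Alternatively, one can observe that a smooth section on $\tilde M$ can be multiplied by a cutoff vanishing near the punctures to get a compactly supported section of $\pi^*\mathcal N\otimes\mathbb C$ on $\tilde M_0$, which descends to a valid test object directly, avoiding a separate descent step.

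The main obstacle is the analysis near the punctures: one must verify that all the error terms produced by the cutoff genuinely vanish, which hinges on the precise asymptotic geometry of a finite-total-curvature end — namely that after the conformal compactification the induced metric, the bundle metric on $\mathcal N$, and the connection all extend continuously (indeed with controlled singularities) across $p_j$, so that $\|s\|$, $\|\partial s\|$, $\|\bar\partial s\|$ are bounded in $L^2$ near the puncture while $\int\|d\varphi_\varepsilon\|^2\to 0$. This is standard from Osserman's structure theory \cite{O} and the bundle extension in \cite{Mi}, but it is the step that requires care. Everything else is the routine logarithmic-cutoff manipulation plus invoking covering stability on each finite cover.
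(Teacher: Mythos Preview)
Your proposal is correct and follows essentially the same route as the paper: apply the complexified stability inequality on the cover to $\varphi_\varepsilon s$ with a logarithmic cutoff $\varphi_\varepsilon$ vanishing near the punctures, use that $\int\|\nabla\varphi_\varepsilon\|^2\,da\to 0$ (the paper phrases this via quadratic area growth from finite total curvature, you via zero capacity of a point in dimension~2---same fact), and pass to the limit. The only difference is organizational: your ``Stage~1'' on $M$ is unnecessary, since the cutoff argument is carried out directly on $\tilde M$ anyway (as you yourself note in the alternative at the end of Stage~2); the paper simply works on $\tilde M$ from the outset.
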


\begin{proof}
$\Sigma$ is given by a conformal branched minimal immersion $F: M_0 \rightarrow \mathbb{R}^n$, with $M_0$ conformally equivalent to a compact Riemann surface $M$ with a finite number of points $\{p_1, \ldots, p_l\}$ removed. The induced metric and connection extend to $M$. Consider any compact covering $\pi: \tilde{M} \rightarrow M$, endowed with the pullback metric and connection.
Let $s \in \Gamma(\pi^*\mathcal{N} \otimes \mathbb{C})$, where $\mathcal{N}$ is the extension of the normal bundle from $M_0$ to the compactified surface $M$. To show that $s$ satisfies the inequality  (\ref{equation:stability-compactified}), we use the following logarithmic cut-off function supported away from the puncture points. Choose a coordinate $z$ centered at $p_i$ and for small $\varepsilon$, define $\varphi_i$  by
\[
     \varphi_i(z) =\left\{ \begin{array} {ll} 0 & |z| \le \epsilon^2  \\ 
	                   \frac{\log \left(\frac{|z|}{\epsilon^2}\right)}{-\log \epsilon} & \epsilon^2 \le  |z| \le \epsilon \\ 
	                  1 & \epsilon \le |z|  \end{array} \right.
\]
and define $\varphi$ by
\[
    \varphi=\left\{ \begin{array} {ll} \varphi_i & \mbox{ on } B_\epsilon(p_i),  \; i=1, \ldots, l  \\ 
	                  1 & \mbox{ otherwise}.  \end{array} \right.
\]
Since $\Sigma$ has finite total curvature, $M$ and hence $\tilde{M}$, has quadratic area growth, and we have
\begin{equation} \label{equation:cutoff-limit}
     \lim_{\epsilon \rightarrow 0} \int_{\tilde{M}} \|\nabla \varphi \|^2 \; da =0.
\end{equation}
Then, since $\Sigma$ is covering stable and $\varphi s$ is compactly supported away from the punctures,
\[
    \int_{\tilde{M}} \|(\partial (\varphi s))^{\top}\|^2  \; da
     \leq \int_{\tilde{M}} \|(\bar{\partial} (\varphi s))^\perp\|^2 \; da,
\]
or
\begin{align*}
    \int_{\tilde{M}} \varphi^2  \, \|(\partial s)^{\top}\|^2  \; da
    & \leq \int_{\tilde{M}} \varphi^2 \, \|(\bar{\partial} s)^\perp\|^2 \; da 
        + \int_{\tilde{M}} \|\nabla \varphi\|^2 \, \|s\|^2 \; da \\
        & \quad + 2 \left( \int_{\tilde{M}} \| \nabla \varphi \|^2 \; da \right)^{\frac{1}{2}} 
           \left( \int_{\tilde{M}} \|s\|^2 \| (\bar{\partial} s)^\perp\|^2 \; da \right)^{\frac{1}{2}}.
\end{align*}
Letting $\epsilon \rightarrow 0$ and using (\ref{equation:cutoff-limit}), we obtain (\ref{equation:stability-compactified}).
\end{proof}

\section{Vector bundles over the torus and almost holomorphic sections} \label{section:bundles}

Recall that a holomorphic vector bundle $E$ over a compact complex manifold 
$M$ is {\em indecomposable} if $E$ does not admit a direct sum decomposition $E=E_1 \oplus E_2$ with $E_1$ and $E_2$ proper holomorphic subbundles of $E$. By the Krull-Schmidt theorem, any holomorphic vector bundle  $E$ over a compact complex manifold $M$ admits a direct sum decomposition $E=E_1 \oplus E_2 \oplus \cdots \oplus E_l$ with $E_i$, $1 \leq i \leq l$, indecomposable holomorphic vector bundles, and the decomposition is unique up to reordering of the summands. 
\begin{lemma} \label{lemma:splitting-covers}
Let $E$ be a holomorphic vector bundle over a complex manifold $M$, and let $\cdots \rightarrow M_{k+1} \rightarrow M_k \rightarrow \cdots \rightarrow M_2 \rightarrow M_1=M$ be an infinite tower of finite covers of $M$, $\pi_k: M_k \rightarrow M$, $k=1, 2, \ldots$. Then there exists an integer $K$ such that for all $k>K$, the summands in the indecomposable direct sum decomposition of the lift $\pi_k^*E$ of $E$ to $M_k$ are the lifts of the summands in the indecomposable decomposition of $\pi_K^*E$.
\end{lemma}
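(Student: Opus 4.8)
The plan is to track a single integer: for each $k$, let $n_k$ be the number of summands in the indecomposable direct sum decomposition of the pullback $\pi_k^*E$ over $M_k$. Since $M_k$ is a finite cover of $M$, the Krull--Schmidt theorem recalled above applies on $M_k$ just as on $M$, so $n_k$ is well defined. I will show that $(n_k)$ is non-decreasing and bounded above, hence eventually constant, and that as soon as it is constant the indecomposable summands themselves stabilize as lifts.

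For the monotonicity, fix $k$ and let $\rho \colon M_{k+1} \to M_k$ be the covering map appearing in the tower, so $\pi_{k+1} = \pi_k \circ \rho$. Write the indecomposable decomposition $\pi_k^*E = F_1 \oplus \cdots \oplus F_{n_k}$. Pullback commutes with direct sums, so $\pi_{k+1}^*E = \rho^*F_1 \oplus \cdots \oplus \rho^*F_{n_k}$. Each $\rho^*F_i$ is a holomorphic bundle of rank $\mathrm{rank}\, F_i \geq 1$, hence decomposes into at least one indecomposable summand; refining each $\rho^*F_i$ in this way exhibits $\pi_{k+1}^*E$ as a direct sum of at least $n_k$ indecomposables, and by the uniqueness part of Krull--Schmidt this refinement is the indecomposable decomposition of $\pi_{k+1}^*E$. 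Therefore $n_{k+1} \geq n_k$. Moreover every indecomposable summand has rank at least $1$, so $n_k \leq \mathrm{rank}\, E$ for all $k$. A non-decreasing integer sequence bounded above is eventually constant, so there is an integer $K$ with $n_k = n_K$ for all $k \geq K$.

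It remains to identify the summands for $k \geq K$. Let $\sigma_k \colon M_k \to M_K$ be the composition of the covering maps in the tower, so $\pi_k = \pi_K \circ \sigma_k$, and write $\pi_K^*E = E_1 \oplus \cdots \oplus E_{n_K}$ with each $E_i$ indecomposable over $M_K$. Then $\pi_k^*E = \sigma_k^*E_1 \oplus \cdots \oplus \sigma_k^*E_{n_K}$. Exactly as above, each $\sigma_k^*E_i$ contains at least one indecomposable summand, for a total of at least $n_K$ summands; since $n_k = n_K$, each $\sigma_k^*E_i$ must already be indecomposable. By the uniqueness of the Krull--Schmidt decomposition, $\sigma_k^*E_1 \oplus \cdots \oplus \sigma_k^*E_{n_K}$ is the indecomposable decomposition of $\pi_k^*E$; that is, the summands of $\pi_k^*E$ are precisely the lifts $\sigma_k^*E_i$ of the summands $E_i$ of $\pi_K^*E$, which is the assertion of the lemma.

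The argument is short, and the one place to be careful is the monotonicity step. Its content is simply that passing to a finite cover cannot \emph{merge} indecomposable summands (because pullback commutes with $\oplus$) and cannot \emph{destroy} one (because pullback preserves rank), so the count of summands can only increase; the bound by $\mathrm{rank}\, E$ then forces stabilization by pigeonhole. One should also confirm that Krull--Schmidt is legitimately available on every $M_k$, which holds since each $M_k$ is a finite cover of $M$.
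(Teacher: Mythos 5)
Your proposal is correct and takes essentially the same route as the paper: successively pull back the indecomposable decomposition along the tower and use the fact that the rank of $E$ bounds how far it can refine. You make the paper's sketch precise by tracking the number $n_k$ of indecomposable summands as a non-decreasing integer bounded by $\mathrm{rank}\,E$, and then observing that once $n_k$ stabilizes each lifted summand must itself be indecomposable, which is exactly the stabilization the paper asserts.
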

\begin{proof}
We have $E=E_1 \oplus E_2 \oplus \cdots \oplus E_l$ with $E_i$, $1 \leq i \leq l$, indecomposable holomorphic vector bundles. 
The direct sum decomposition of $E$ gives a decomposition of the lifted bundle $\pi_2^*E$ over $M_2$,
\begin{equation} \label{equation:cover-decomposition2}
        \pi_2^*E= \pi_2^*E_1 \oplus \pi_2^*E_2 \oplus \cdots \oplus \pi_2^*E_l.
\end{equation}
Each summand in this decomposition may decompose further into indecomposable subbundles of smaller rank.
Successively lifting the indecomposable direct sum decomposition from the previous cover, for $k=2, 3, \ldots$, since $\pi_k^*E$ has finite rank equal to the rank of $E$, the decomposition of the lifted bundle must eventually either fully split into a direct sum of line bundles, or become a stable decomposition into indecomposable subbundles lifted from the previous cover for all $k \geq K$ for some $K$. 
\end{proof}
If $E$ is self-dual, then
\begin{align*}
     E_1 \oplus E_2 \oplus \cdots \oplus E_l \cong (E_1 \oplus E_2 \oplus \cdots \oplus E_l)^*
     \cong E_1^* \oplus E_2^* \oplus \cdots \oplus E_l^*
\end{align*}
and since $c_1(E_i^*)=-c_1(E_i)$, we have
\begin{equation} \label{equation:decomposition}
         E=P \oplus Z \oplus P^*
\end{equation}
where $P=\oplus_i P_i$ is a direct sum of indecomposable bundles of positive degree, $Z=\oplus_j Z_j$ is a direct sum of indecomposable bundles of degree zero, and $P^*=\oplus_i P_i^*$.

We now specialize to the case where $M$ is a compact Riemann surface of genus 1. In this case, we will show that after lifting to a suitable covering, we can find holomorphic and almost holomorphic sections that span the lift of $P \oplus Z$. 
\begin{definition}
If $E$ is Hermitian, given $\epsilon >0$, we say that a section $s \in \Gamma(E)$ is (pointwise) {\em $\epsilon$-almost holomorphic} if  $\|\bar{\partial} s\| \leq \epsilon \|s\|$.
\end{definition}

In \cite{A}, Atiyah classified indecomposable vector bundles over an elliptic curve. We will use the following.

\begin{lemma} \label{lemma:global-generation}
Let $E$ be an indecomposable holomorphic vector bundle of degree $d$ and rank $r$ over a compact Riemann surface $M$ of genus 1. If $d > r+2$, then $E$ is globally generated by holomorphic sections which each have a zero; specifically,  given $p  \in  M$, there are holomorphic sections $s_1, \ldots , s_m \in \Gamma(E)$ which each have a zero such that the fiber $E_p$ of $E$ at $p$ is spanned by $s_1(p), \ldots, s_m(p)$. 
\end{lemma}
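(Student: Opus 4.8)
The plan is to use Atiyah's classification of indecomposable bundles over an elliptic curve. The key structural fact is that every indecomposable bundle $E$ of rank $r$ and degree $d$ on a genus $1$ curve $M$ can be written, up to tensoring by a line bundle, as $F_r \otimes L$ where $F_r$ is the Atiyah bundle (the unique indecomposable bundle of rank $r$ and degree $0$ with a nonzero section, built up by successive nonsplit extensions by $\mathcal{O}_M$) and $L$ is a line bundle of degree $d/\gcd(d,r)$ appropriately chosen; more precisely, when $\gcd(d,r)=1$, $E$ is of the form $F_r \otimes L$ with $\deg L = d$, and it is simple. What I want to extract from Atiyah is a statement about global generation and about the cohomology $H^0(E)$ and $H^1(E)$.

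**Key steps.** First I would record that for an indecomposable $E$ of rank $r$, degree $d$ on $M$ of genus $1$, if $d > 0$ then $H^1(M,E) = 0$: this follows because by Serre duality $H^1(M,E) \cong H^0(M,E^*)^*$, and $E^*$ is indecomposable of negative degree, hence (again by Atiyah, or by a slope/stability argument since indecomposable bundles on an elliptic curve are semistable) has no nonzero holomorphic sections. Consequently, by Riemann–Roch on a genus $1$ curve, $h^0(M,E) = d$. Second, I would show global generation: for a point $p \in M$, consider the evaluation sequence $0 \to E \otimes \mathcal{O}_M(-p) \to E \to E_p \to 0$. The bundle $E \otimes \mathcal{O}_M(-p)$ is indecomposable (tensoring by a line bundle preserves indecomposability) of degree $d - r$. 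If $d - r > 0$, i.e. $d > r$, then $H^1(M, E \otimes \mathcal{O}_M(-p)) = 0$ by the first step, so the evaluation map $H^0(M,E) \to E_p$ is surjective; this shows $E$ is globally generated and that we may pick $s_1,\dots,s_m \in H^0(M,E)$ whose values at $p$ span $E_p$. Third, I need that the spanning sections can each be chosen to have a zero somewhere on $M$. A holomorphic section $s$ of $E$ with no zero would give an inclusion $\mathcal{O}_M \hookrightarrow E$ as a subbundle (not just a subsheaf), and since $\deg E = d > 0 = \deg \mathcal{O}_M$, one checks using semistability that this is impossible once $d$ is large; more robustly, I would argue that the sections of $E$ vanishing at a fixed auxiliary point $q \neq p$ form the subspace $H^0(M, E\otimes\mathcal{O}_M(-q))$ of dimension $d - r > 0$, and that restricting the evaluation-at-$p$ map to this subspace is still surjective onto $E_p$ provided $H^1(M, E\otimes \mathcal{O}_M(-p-q)) = 0$, which holds when $\deg(E\otimes\mathcal{O}_M(-p-q)) = d - 2r > 0$, i.e. $d > 2r$. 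This is where the hypothesis $d > r+2$ must be leveraged more carefully — the crude bound $d > 2r$ is too weak — so the actual argument must use the finer structure of $H^0$ of Atiyah bundles rather than just Riemann–Roch and semistability.

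**Main obstacle.** The hard part is getting the precise numerical threshold $d > r + 2$ rather than something like $d > 2r$. This forces one to go beyond generic vanishing and semistability estimates and to use Atiyah's explicit description: write $E = F_r \otimes L$ (in the coprime case) or handle the general case via the filtration of $E$ by sub-bundles with indecomposable quotients of controlled rank and degree, and analyze which of the spanning sections can be taken to vanish. The cleanest route is probably: (i) show $H^0(M, E)$ has dimension $d$ and the base locus of $|E|$ is empty when $d > r$ (global generation), then (ii) for each vector $v \in E_p$ show the affine space of sections $s$ with $s(p) = v$ has dimension $d - r \geq r + 3 - r = 3 > 0$ when $d > r+2$, and within that positive-dimensional family the locus of nowhere-vanishing sections is a proper (in fact Zariski-open with nonempty complement) subset because a nowhere-vanishing section trivializes a rank-$1$ subbundle of slope $0 < d/r$, contradicting semistability of the indecomposable bundle $E$; since every section in a $d-r \geq 3$-dimensional family cannot be forced to be nowhere vanishing, we can adjust each $s_i$ within its coset to acquire a zero while preserving the spanning property at $p$. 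I would present the semistability input (indecomposable $\Rightarrow$ semistable on an elliptic curve, a theorem of Atiyah) as the crucial lemma and cite \cite{A} for it, and I expect verifying that "every member of the family has a zero" rigorously — as opposed to the generic member — to be the step requiring the most care.
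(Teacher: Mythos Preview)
Your global generation argument (the evaluation sequence and $H^1$ vanishing for indecomposable bundles of positive degree) is correct and essentially reproves what the paper cites from \cite{Gu}: an indecomposable bundle on an elliptic curve is globally generated once $d>r$.

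The gap is in your argument for zeros. You claim that a nowhere-vanishing section would give a trivial line subbundle $\mathcal{O}_M\hookrightarrow E$ of slope $0<d/r$, ``contradicting semistability.'' But the inequality runs the wrong way: semistability of $E$ only says that every subbundle has slope $\leq \mu(E)=d/r$, and $0\leq d/r$ is perfectly consistent. So neither semistability nor stability of $E$ rules out nowhere-vanishing sections. In fact, for rank $r\geq 2$ the zero locus of a section has expected codimension $r\geq 2$ on a curve, so the generic section is typically nowhere-vanishing; your assertion that the locus of nowhere-vanishing sections in the affine coset $\{s:s(p)=v\}$ has nonempty complement is exactly what needs proof and is not supplied. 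Your fallback $d>2r$ argument (forcing a zero at an auxiliary point $q$) works but, as you note, gives the wrong threshold.

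The paper's proof bypasses this entirely with a twist trick that also explains the constant $r+2$. Take a degree $2$ line bundle $L$ on $M$; then $E\otimes L^*$ is indecomposable of degree $d-2$ and rank $r$, and the hypothesis $d>r+2$ is exactly $d-2>r$, so $E\otimes L^*$ is globally generated by sections $s_1,\dots,s_m$. By Riemann--Roch, $L$ has two independent sections $t_1,t_2$ which globally generate it and each vanish at two points. The products $s_i\otimes t_j$ then globally generate $E\cong (E\otimes L^*)\otimes L$ and each inherits a zero from $t_j$. This avoids any analysis of which sections of $E$ itself vanish.
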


\begin{proof}
Let $L$ be a line bundle of degree 2. Then $E \otimes L^*$ has degree $d-2$ and rank $r$. By \cite[Proposition 3.2]{Gu}, since $\deg E \otimes L^* > \mbox{rank}\, E \otimes L^*$, $E \otimes L^*$ is globally generated by holomorphic sections $s_1(p), \ldots, s_m(p)$. By the Riemann-Roch Theorem, a degree two bundle $L$ has two independent sections $t_1$ and $t_2$ which gloabally generate $L$, and $t_1$ and $t_2$ each have two zeros. Therefore, $s_1 \otimes t_1, \ldots, s_m \otimes t_1, s_1 \otimes t_2, \ldots, s_m \otimes t_2$ are holomorphic sections, which each have zeros, that globally generate $E \cong (E \otimes L^*) \otimes L$.
\end{proof}

First consider the indecomposable subbundles of positive degree. If $\pi: \tilde{M} \rightarrow M$ is a covering of degree $k$, then for $k$ sufficiently large the degree $c_1(\pi_k^*P_i)=k c_1(P_i)$ is strictly bigger than the rank of $\pi_k^*P_i$, and so by Lemma \ref{lemma:global-generation}, if $\pi_k^*P_i$ is indecomposable then it is globally generated by holomorphic sections. 

\begin{lemma} \label{lemma:condition2}
Let $E=P \oplus Z \oplus P^*$ be a self-dual holomorphic vector bundle over a compact Riemann surface $M$ of genus 1. Suppose $P$ is globally generated by holomorphic sections which each have at least one zero.
Then $H^0(P^* \otimes (P \oplus Z)^*)=\{0\}$. 
\end{lemma}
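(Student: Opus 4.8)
The plan is to reinterpret the cohomology group as a space of bundle homomorphisms and to kill every such homomorphism by pairing it with the distinguished generating sections of $P$. Concretely, using $\mathrm{Hom}(A,B)\cong A^*\otimes B$ and $(P\oplus Z)^*=P^*\oplus Z^*$, we have a natural identification
$H^0\big(P^*\otimes (P\oplus Z)^*\big)=\mathrm{Hom}_{\mathcal{O}_M}\big(P,\,P^*\oplus Z^*\big)$, the space of holomorphic bundle maps $\Phi\colon P\to P^*\oplus Z^*$. Here the fact that the source is $P$ (rather than $P\oplus Z$) is what makes the argument work, since $P$ is globally generated by the hypothesis while $Z$ need not be.

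First I would record two facts from Atiyah's classification \cite{A}: (i) an indecomposable holomorphic bundle of negative degree over a genus $1$ surface has no nonzero holomorphic section; and (ii) a nonzero holomorphic section of an indecomposable bundle $V$ of degree $0$ is nowhere vanishing — indeed if such a section vanished at $q$ it would yield a nonzero section of $V\otimes\mathcal{O}(-q)$, which is indecomposable of negative degree, contradicting (i). Writing $P=\bigoplus_i P_i$, $Z=\bigoplus_j Z_j$ with $P_i,Z_j$ indecomposable, the summands $P_i^*$ of the target are indecomposable of negative degree, and the summands $Z_j^*$ are indecomposable of degree $0$.

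Now let $\Phi\colon P\to P^*\oplus Z^*$ be holomorphic, and let $\{s_k\}$ be the holomorphic sections of $P$ supplied by the hypothesis: they span $P_p$ for every $p\in M$, and each $s_k$ vanishes at some point $p_k\in M$. For each $k$, $\Phi(s_k)$ is a holomorphic section of $P^*\oplus Z^*=\bigoplus_i P_i^*\oplus\bigoplus_j Z_j^*$. Its $P_i^*$-components lie in $H^0(P_i^*)=\{0\}$ by (i), so they vanish identically. Its $Z_j^*$-components vanish at the point $p_k$, because $\Phi$ is $\mathcal{O}_M$-linear and $s_k(p_k)=0$; since these components lie in $H^0(Z_j^*)$ and $Z_j^*$ is indecomposable of degree $0$, fact (ii) forces them to vanish identically as well. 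Hence $\Phi(s_k)\equiv 0$ for all $k$. Since $\{s_k(p)\}$ spans $P_p$ for every $p$, we conclude $\Phi_p=0$ for all $p$, i.e. $\Phi=0$. Therefore $H^0(P^*\otimes(P\oplus Z)^*)=\{0\}$.

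I expect the only genuine subtlety to be the interplay of the two parts of the hypothesis: global generation is used for both blocks of the target, but the fact that each generating section of $P$ has a zero is exactly what is needed to annihilate the degree-zero summands $Z_j^*$, whose $H^0$ cannot be ruled out on degree grounds alone. The positive-degree block is disposed of purely by the vanishing statement (i). No estimates or delicate analysis enter; beyond this bookkeeping the proof only invokes the two elementary consequences of Atiyah's description of $h^0$ for indecomposable bundles on an elliptic curve.
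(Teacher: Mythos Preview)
Your proof is correct and is essentially the paper's own argument, recast in the language of bundle homomorphisms $\Phi\colon P\to P^*\oplus Z^*$ rather than bilinear pairings $\alpha\colon P\times(P\oplus Z)\to\mathbb{C}$. The two key facts you isolate from \cite{A}---no holomorphic sections in negative degree, and no vanishing holomorphic sections in degree zero---are exactly what the paper invokes, and the use of global generation by sections with zeros to kill $\Phi$ fiberwise matches the paper's conclusion step.
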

\begin{proof}
Let $\alpha \in H^0(P^* \otimes (P \oplus Z)^*))$.  
Given any holomorphic section $s \in H^0(P)$ with a zero, $\alpha(s, \cdot)$ is a holomorphic section of $(P \oplus Z)^*$ with a  zero. By \cite[Theorem 5]{A} an indecomposable holomorphic vector bundle 
of degree zero has either no holomorphic sections or a unique holomorphic section with no zeros. Therefore $Z^*$ has no holomorphic sections with a  zero. But $P^*$ has no holomorphic sections, and so $(P \oplus Z)^*$ has no homomorphic sections with a  zero. Therefore, $\alpha(s, \cdot)=0$ for all holomorphic sections $s$ of $P$ with a zero. Since $P$ is spanned by holomorphic sections which each have a zero, $\alpha =0$.
\end{proof}

\subsection{Degree zero indecomposable bundles}

We consider here the case of an indecomposable summand of degree zero. Note that such bundles are topologically trivial, 
but may not have holomorphic sections. We show that after lifting to a suitable covering torus we can find $\epsilon$-almost holomorphic sections which span the lifted bundle for any $\epsilon>0$. We begin the discussion with the case of a degree zero line bundle $L$. 

We choose a lattice generated by $\{1,\tau\}$ where $\tau$ is a complex number in the upper half plane, and we assume
that our torus is $M=\mathbb C/\Lambda$ where $\Lambda$ is the lattice subgroup $\Lambda=\{m+n\tau:\ m,n\in\mathbb Z\}$.
For a positive integer $k$ we consider the sub-lattice $k\Lambda\subset \Lambda$ and we denote by $M_k$ the
corresponding covering torus $M_k=\mathbb C/k\Lambda$ which is a covering torus of degree $k^2$. 

Now suppose we have a degree zero line bundle $L$ as a summand of our bundle $E$. It is well known that $L$ admits a flat $U(1)$ connection $\nabla$.  The lift of $L$ to the universal covering $\mathbb C$ is a trivial bundle and has
a global unit parallel section $s_0$. The action of the deck group is then generated by $\phi,\theta\in [0,2\pi)$ where
$s_0(z+1)=e^{i\phi} s_0(z)$ and $s_0(z+\tau)=e^{i\theta}s_0(z)$. We now have the following lemma.
\begin{lemma} \label{flatline} Given any $\epsilon>0$ there exists a positive integer $k_0$ so that for $k\geq k_0$ there is a unit length
section $s$ of the lift $L_k$ of $L$ to $M_k$ satisfying $\|\partial s/\partial \bar{z}\|\leq \epsilon$.
\end{lemma}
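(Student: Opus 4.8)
The plan is to represent sections of $L_k$ explicitly via the parallel section $s_0$, and then to manufacture a unit section with small $\bar\partial$ by choosing a suitable real-linear phase. Since $\nabla$ is compatible with the holomorphic structure of $L$ and $s_0$ is $\nabla$-parallel, $s_0$ is holomorphic, i.e.\ $\partial s_0/\partial \bar z=0$; hence for a section $s=f\,s_0$ of the pullback of $L$ to $\mathbb C$ one has $\partial s/\partial\bar z=(\partial f/\partial\bar z)\,s_0$. Moreover $s$ descends to a section of the lift $L_k$ over $M_k=\mathbb C/k\Lambda$ exactly when $f$ is $k\Lambda$-equivariant, i.e.
\[
   f(z+k)=e^{-ik\phi}f(z),\qquad f(z+k\tau)=e^{-ik\theta}f(z).
\]
If in addition $|f|\equiv 1$ then $\|s\|\equiv 1$ and $\|\partial s/\partial\bar z\|=|\partial f/\partial\bar z|$, so the task reduces to finding such an equivariant $f$ of modulus $1$ with $|\partial f/\partial\bar z|\le\epsilon$.

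First I would look for $f=e^{ig}$ with $g(z)=\alpha\,\mathrm{Re}\,z+\beta\,\mathrm{Im}\,z$ for real constants $\alpha,\beta$. Then $|f|\equiv 1$ and $\partial f/\partial\bar z=\tfrac{i}{2}(\alpha+i\beta)f$, so $\|\partial s/\partial\bar z\|=\tfrac12\sqrt{\alpha^2+\beta^2}$ is constant. Using $g(z+k)=g(z)+\alpha k$ and $g(z+k\tau)=g(z)+\alpha k\,\mathrm{Re}\,\tau+\beta k\,\mathrm{Im}\,\tau$, the equivariance conditions become the two congruences
\[
   k(\alpha+\phi)\in 2\pi\mathbb Z,\qquad k\bigl(\alpha\,\mathrm{Re}\,\tau+\beta\,\mathrm{Im}\,\tau+\theta\bigr)\in 2\pi\mathbb Z.
\]
I would satisfy these by rounding: choose $m\in\mathbb Z$ with $|m-k\phi/2\pi|\le\tfrac12$ and set $\alpha=2\pi m/k-\phi$, so $|\alpha|\le\pi/k$; then choose $n\in\mathbb Z$ with $|n-\tfrac{k}{2\pi}(\theta+\alpha\,\mathrm{Re}\,\tau)|\le\tfrac12$ and set $\beta=(\mathrm{Im}\,\tau)^{-1}\bigl(2\pi n/k-\theta-\alpha\,\mathrm{Re}\,\tau\bigr)$, so $|\beta|\le\pi/(k\,\mathrm{Im}\,\tau)$. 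For these choices
\[
   \Bigl\|\frac{\partial s}{\partial\bar z}\Bigr\|=\frac12\sqrt{\alpha^2+\beta^2}\le\frac{\pi}{2k}\sqrt{1+(\mathrm{Im}\,\tau)^{-2}},
\]
which tends to $0$ as $k\to\infty$; picking $k_0$ with $\tfrac{\pi}{2k_0}\sqrt{1+(\mathrm{Im}\,\tau)^{-2}}\le\epsilon$ then completes the argument.

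There is no serious obstacle here: the estimate is an elementary rounding (Diophantine) argument, and the only real content is the reduction of the first paragraph. The one conceptual point worth flagging is why passing to a cover is necessary at all. The choice $\alpha=\beta=0$ would give a genuine $\bar\partial$-closed unit section, but it forces $k\phi,k\theta\in 2\pi\mathbb Z$, which can be arranged for some $k$ only when $(\phi,\theta)\in 2\pi\mathbb Q^2$, i.e.\ when $L$ is a torsion element of $\mathrm{Pic}^0(M)$; for a general degree-zero line bundle the holonomy defect cannot be killed, but on the $k$-fold cover it gets divided by $k$, forcing $\alpha,\beta$ of size $O(1/k)$, which is precisely what the lemma asserts.
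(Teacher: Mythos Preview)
Your argument is correct and is essentially the same as the paper's: both construct a unit section $s=e^{ig}s_0$ with $g$ real-linear, chosen so that the linear phase cancels the holonomy on $M_k$ up to an $O(1/k)$ error, and then compute $\bar\partial s$ directly. The only cosmetic difference is that the paper works in lattice-adapted coordinates $(\xi,\eta)$ with $z=\xi+\eta\tau$, defining $s=e^{-i(\xi\phi_k+\eta\theta_k)/k}s_0$ with $\phi_k=k\phi\bmod 2\pi$, $\theta_k=k\theta\bmod 2\pi$; this makes the periodicity check a one-liner and the $\bar\partial$ estimate immediate, whereas you work in $(x,y)$ and obtain the same conclusion via a two-step rounding for $(\alpha,\beta)$. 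The constants differ slightly (the paper allows $\phi_k,\theta_k\in[0,2\pi)$ while your rounding gives the tighter $|\alpha|\le\pi/k$), but the content is identical; note that the paper's $(\xi,\eta)$ framework is reused in later arguments, so it is worth being aware of.
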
 
\begin{proof} We observe that $s_0(z+k)=e^{i\phi_k}s_0(z)$ and $s_0(z+k\tau)=e^{i\theta_k}s_0(z)$ where 
$\phi_k=k\phi \mod 2\pi$ and $\theta_k=k\theta \mod 2\pi$. By a linear change of coordinates we can express 
$z=\xi+\eta\tau$ where $z=x+iy$, $\tau=\tau_1+i\tau_2$, $\xi=x-y\frac{\tau_1}{\tau_2}$, and $\eta=\frac{y}{\tau_2}$. 
We now define the section 
\[ 
    s(z)=e^{-i \, \left(\frac{\xi}{k}\phi_k+\frac{\eta}{k}\theta_k\right)}s_0(z).
\]
We note that $z+k=(\xi+k)+\eta\tau$ and $z+k\tau=\xi+(\eta+k)\tau$, and therefore we have 
$s(z+k)=s(z)$ and $s(z+k\tau)=s(z)$. Thus $s$ defines a unit length section of $L_k$ over $M_k$.

We compute 
\[
     \frac{\partial s}{\partial \bar{z}}\equiv \nabla_{\frac{\partial}{\partial \bar{z}}} s
     =-\frac{i}{k} \left( \frac{\partial \xi}{\partial\bar{z}} \ \phi_k  + \frac{\partial \eta}{\partial\bar{z}} \ \theta_k \right) s. 
\]     
By direct calculation we have 
\[
       \frac{\partial \xi}{\partial\bar{z}} =\frac{1}{2}\left(1-i \frac{\tau_1}{\tau_2} \right), \qquad  
       \frac{\partial \eta}{\partial\bar{z}}=\frac{i}{2\tau_2}. 
\]
Thus we see that $\|\partial s/\partial\bar{z}\|\leq c/k$ where the constant
$c$ depends only on the lattice $\Lambda$. The result now follows with $k_0$ a positive integer chosen so that $k_0\geq c/\epsilon$
\end{proof}

Now if $E$ is an indecomposable bundle of rank $r>1$, we use the work of M. Atiyah \cite[Theorem 5]{A} which asserts that any
such bundle is the tensor product of a degree zero line bundle $L$ with a unique bundle $F_r$ called the Atiyah
bundle. The bundle $F_r$ is uniquely characterized by the conditions that it is of degree zero, indecomposable of rank $r$, 
and has a non-zero holomorphic section. It is shown in \cite[Corollary 2]{A} that there is a filtration of holomorphic sub-bundles
\[ F_1\subset F_2\subset\ldots\subset F_r
\]
such that the rank of $F_i$ is $i$ and such that $F_1$ and $F_i/F_{i-1}$ are trivial line bundles for $i=2,\ldots, r$.

For our purposes we choose a particular presentation of $F_r$ given by a representation $\rho$ of the fundamental group
$\Lambda$ into $GL(r,\mathbb C)$ given by $\rho(1)=I$ and $\rho(\tau)=A_\delta$ for a positive number $\delta$. Where
$A_\delta$ is the upper triangular matrix with diagonal entries $1$, first super-diagonal entries $\delta$, and all other
entries $0$. The bundle $F_r$ is then the holomorphic bundle determined by $\rho$; that is 
\[ F_r=(\mathbb C\times \mathbb C^r)/\Lambda
\]
where $\Lambda$ acts on the product by $(z,w)\to (z+\lambda,\rho(\lambda)w)$ for $\lambda\in \Lambda$. We see from
the construction that $F_r$ has a flat connection $\nabla$ with holonomy generated by $A_\delta$. If we denote the
standard basis of $\mathbb C^r$ by $e_1,\ldots, e_r$ we see that $e_1$ is invariant under the holonomy and therefore
defines a holomorphic section of $F_r$. It is easily seen that the bundle is indecomposable of degree zero, so the
bundle is holomorphically isomorphic to $F_r$ for any $\delta>0$.  Since there is no invariant metric under $\rho$,
and we seek $\epsilon$-almost holomorphic sections with respect to a chosen metric, we normalize our trivialization
on $\mathbb C$ so that $e_1(0),e_2(0),\ldots,e_r(0)$ form a unitary basis compatible with the filtration so that the
first $j$ span $F_j$ at $z=0$. Observe that for any $\delta\neq 0$ the representation is conjugate to that with $\delta=1$
by the Jordan form. However, this conjugation must be done in the larger group of invertible upper triangular matrices
and does not preserve the metric normalization. On the other hand we see that the flat connection $\nabla$ is
independent of $\delta$, and thus it follows that there is a constant $c$ independent of $\delta$ such that the
eigenvalues of the Hermitian matrix $\langle e_j,e_k\rangle$ are bounded above by $c$ and below by $c^{-1}$
on the disk $\{|z|<1+|\tau|\}$. If a bundle with metric and compatible connection is isomorphic to $F_r$ we show that
it can be spanned by sections which are arbitrarily close to being holomorphic, so that it is almost trivial. 

Now if we consider any degree zero indecomposable bundle $E$ of rank $r$, it is given by $E=L\otimes F_r$ and we 
can also find a flat connection in a natural way by combining the $U(1)$ connection of $L$ with that of $F_r$. We take
our parallel trivialization on $\mathbb C$ as $v_1,\ldots, v_r$ where $v_j=s_0\otimes e_j$ and we see that the
holonomy representation $\rho$ is determined by $\rho(1)=e^{i\phi}I$ and $\rho(\tau)=e^{i\theta}A_\delta$ for some
$\phi,\theta\in [0,2\pi)$. We also have the corresponding filtration $E_1\subset E_2\subset\ldots\subset E_r$ where $E_j=L\otimes F_j$.We now state the first main theorem of this section.  

\begin{theorem} \label{theorem:sections}
Suppose $E$ is an indecomposable bundle of degree zero over $M$ with a Hermitian metric compatible 
with the holomorphic structure.  Given any $\epsilon>0$ there exists a positive integer $k_0$ so  that for $k\geq k_0$ 
there is a basis of sections $s_1,\ldots,s_r$ of the lift $E_k$ of $E$ to $M_k$ which respects the filtration and satisfies
$\|\partial s_j/\partial\bar{z}\|\leq \epsilon \|s_j\|$. Furthermore the matrix $\langle s_j,s_k\rangle$ has eigenvalues bounded
above and below by positive multiplicative constants independent of $k$.
\end{theorem}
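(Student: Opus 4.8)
The plan is to make everything explicit in the flat holomorphic trivialization of $E=L\otimes F_r$ set up above, with the one essential twist that the parameter $\delta$ in the holonomy presentation $\rho(1)=e^{i\phi}I$, $\rho(\tau)=e^{i\theta}A_\delta$ will be allowed to depend on $k$. Keep the notation $v_1,\dots,v_r$ for the parallel holomorphic frame on the universal cover (with $v_j=s_0\otimes e_j$), $z=\xi+\eta\tau$ for the real-linear coordinates of Lemma \ref{flatline}, $E_j=\operatorname{span}(v_1,\dots,v_j)$ for the filtration, and $G(z)=(\langle v_i,v_j\rangle)$ for the Gram matrix of the metric in this frame; recall from the discussion above that the eigenvalues of $G$ lie in $[c^{-1},c]$ on the fixed disk $\{|z|<1+|\tau|\}$ with $c$ depending only on $E$ (not on $\delta$), and that $G(z+\lambda)=\rho(\lambda)^{-*}G(z)\rho(\lambda)^{-1}$.

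First I would fix $\delta=\delta_k$ with $k\delta_k\to 0$, say $\delta_k=k^{-2}$, and record that then $\|A_{\delta_k}^{\pm n}\|\le 2$ for all $0\le n\le k$ once $k$ is large (since $A_{\delta_k}=I+N$ with $N$ nilpotent of order $r$ and entries $O(\delta_k)$ on the first superdiagonal). Writing an arbitrary $z$ as $z_0+m+n\tau$ with $z_0$ in a $\Lambda$-fundamental domain contained in $\{|z|<1+|\tau|\}$, and noting that on a fundamental domain of $M_k=\mathbb{C}/k\Lambda$ one has $0\le m,n<k$, the transformation law gives $G(z)=(A_{\delta_k}^{-n})^{*}G(z_0)A_{\delta_k}^{-n}$ there, so the eigenvalues of $G$ lie in a fixed interval $[c_0^{-1},c_0]$ over that domain, with $c_0$ independent of $k$. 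Next I would define the sections: take $g_1=\exp\big(i(a\xi+b\eta)\big)$ with $a,b\in[-\pi/k,\pi/k]$ chosen so that $ka\equiv k\phi$ and $kb\equiv k\theta\pmod{2\pi}$ (exactly the correction factor of Lemma \ref{flatline}), and set
\[
  s_j=g_1\sum_{l=0}^{j-1}\binom{\eta}{l}\,\delta_k^{\,l}\,v_{j-l},\qquad j=1,\dots,r.
\]
Using $A_{\delta_k}^{k}v_j=\sum_l\binom{k}{l}\delta_k^{\,l}v_{j-l}$ together with the Vandermonde identity $\binom{\eta+k}{l}=\sum_i\binom{k}{i}\binom{\eta}{l-i}$, one checks that each $s_j$ (viewed in the trivialization) is invariant under $z\mapsto z+k$ and satisfies $s_j(z+k\tau)=e^{ik\theta}A_{\delta_k}^{k}s_j(z)$, hence descends to a smooth section of the lift $E_k$; since $s_1,\dots,s_j$ involve only $v_1,\dots,v_j$ and the change-of-frame matrix from $(v_1,\dots,v_j)$ to $(s_1,\dots,s_j)$ is $g_1$ times a unipotent upper triangular matrix, hence invertible at every point, $(s_1,\dots,s_j)$ is a frame for the lift of $E_j$, so the basis respects the filtration.

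It remains to estimate. The Gram matrix of $(s_1,\dots,s_r)$ equals $W^{*}GW$, where $W$ is the unipotent upper triangular matrix with $W_{m,j}=\binom{\eta}{j-m}\delta_k^{\,j-m}$ for $m\le j$; since $|\binom{\eta}{l}|\le k^{l}/l!$ on a fundamental domain of $M_k$ and $k\delta_k\to 0$, we get $\|W-I\|\to 0$, which together with the bound on $G$ confines the eigenvalues of $W^{*}GW$ to a fixed interval independent of $k$ and, in particular, bounds $\|s_j\|$ above and below independently of $k$. For the almost-holomorphicity, write $s_j=g_1w_j$ so that $\bar\partial s_j=(\bar\partial g_1)w_j+g_1\sum_{l\ge 1}\delta_k^{\,l}\big(\bar\partial\binom{\eta}{l}\big)v_{j-l}$ (using that the $v_{j-l}$ are holomorphic); here $\|\bar\partial g_1\|\le C/k$ because $|a|,|b|\le\pi/k$ and $\partial_{\bar z}\xi,\partial_{\bar z}\eta$ are constants of the lattice, and $|\partial_{\bar z}\binom{\eta}{l}|\le Ck^{l-1}/(l-1)!$ on a fundamental domain, so the second sum has norm $O(\delta_k e^{2k\delta_k})=O(\delta_k)$. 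Combining these with the bounds on $\|v_{j-l}\|$ and $\|s_j\|$ gives $\|\bar\partial s_j\|\le(C/k+C'\delta_k)\|s_j\|$, and choosing $k_0$ with $C/k_0+C'\delta_{k_0}<\epsilon$ finishes the proof.

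The step I expect to be the main obstacle — and the reason $\delta$ must be allowed to vary with $k$ — is the uniform-in-$k$ control of the Gram matrix of the parallel frame over an entire fundamental domain of $M_k$: for fixed $\delta$ the norms $\|A_\delta^{\pm n}\|$ for $n$ up to $k$ grow polynomially in $k$, which would make both the eigenvalue bound and the $\bar\partial$-estimate deteriorate as $k\to\infty$; only after $\delta_k$ is taken small enough that these norms stay bounded do the remaining computations become routine.
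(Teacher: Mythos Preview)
Your argument is correct, and the core construction coincides with the paper's: your binomial expansion $\sum_l\binom{\eta}{l}\delta^l v_{j-l}=(I+N)^\eta v_j=A_\delta^{\eta}v_j$ is the same object as the paper's $e^{-\eta B}e_j=A_\delta^{-\eta}e_j$, up to a sign on $\eta$ absorbed by conventions. The one real difference is organizational. You let $\delta=\delta_k$ shrink with $k$ because you track the parallel frame $v_j$ over a full fundamental domain of $M_k$, where the Gram matrix picks up factors $A_\delta^{\pm n}$ with $n$ ranging up to $k$; the constraint $k\delta_k\to 0$ is what tames these. The paper instead separates the tensor factors and observes that the $F_r$ part of the construction already descends to $M$ itself: the sections $w_j=e^{-\eta B}e_j$ are $\Lambda$-periodic (no covering needed), so their Gram matrix lives on the fixed torus $M$ and is automatically $k$-independent. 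Hence $\delta$ may be chosen small but \emph{fixed}, and the covering $M_k$ is needed only for the line bundle factor $L$. In short, the obstacle you single out as the crux is genuine for your route but evaporates once one notices that the unipotent correction is already periodic under $\eta\mapsto\eta+1$. Your version has the virtue of handling $E$ in one piece on $M_k$; the paper's decouples the two small parameters $1/k$ and $\delta$ and reveals that no covering is required for the rank-$r$ part at all.

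One cosmetic point: the sign in your phase $g_1=\exp\!\big(i(a\xi+b\eta)\big)$ should be reversed to match the correction factor of Lemma~\ref{flatline} and make the claimed invariance under $z\mapsto z+k$ literally hold; this is a convention issue, not a gap.
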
 

\begin{proof} First note that since $E$ is isomorphic to $L\otimes F_r$ if we choose a metric on $F_r$ and $L$
we get a corresponding metric $\langle\cdot,\cdot\rangle_0$  on the tensor product. Since $M$ is compact this metric
is equivalent up to constants with the given metric $\langle\cdot,\cdot\rangle$.

We let $s$ be the unit length section of $L_k$ constructed in the proof of the lemma with 
$\|\partial s/\partial\bar{z}\|\leq c/k$. We will fix $k$ sufficiently large and construct the sections $s_j=s\otimes w_j$
where $w_j$ is a section invariant under the full lattice $\Lambda$ and almost holomorphic. Thus $w_j$
is the lift of a section of $F_r$ over $M$ whose construction we now describe.

We consider the lattice $\Lambda$ and show how to construct almost holomorphic sections of $F_r$ over this lattice (no
covering is necessary). We let $N=A_\delta-I$ and define the matrix logarithm of $A_\delta$ by
\[ B=\log(I+N)=\sum_{j=1}^{r-1}(-1)^{j+1}\frac{N^j}{j}
\]
which is a finite sum because $N$ is a nilpotent matrix. Now we have the normalized parallel sections $e_j$ 
over $\mathbb C$ and we define $w_j$
by 
\[ w_j(z)=e^{-\eta B}e_j(z).
\]
We then have $w_j(z+1)=w_j(z)$ and $w_j(z+\tau)=e^{-\eta B}A_\delta^{-1} e_j(z+\tau)=w_j(z)$. We clearly have
$\|B\|\leq c\delta$. We now choose our metric $\langle\cdot,\cdot\rangle_0$ so that the $w_j$ are orthonormal. Note that
this metric depends on $\delta$, but converges as $\delta$ goes to $0$ to the metric on the trivial bundle with trivialization $e_1, \ldots, e_r$. Note that
the holomorphic structure does not converge in this limit. Therefore we have
\[ \frac{\partial w_j}{\partial\bar{z}}=-\frac{\partial\eta}{\partial\bar{z}}Bw_j=-\frac{i}{2\tau_2}Bw_j,
\]
and thus $\|\partial w_j/\partial\bar{z}\|_0\leq c\delta\|w_j\|_0$.

To complete the proof we let $s_j=s\otimes w_j$ where $w_j$ is lifted from $M$, and we have
\[ 
          \left\| \frac{\partial s_j}{\partial\bar{z}} \right\|_0
          =\left\| \frac{\partial s}{\partial\bar{z}} \otimes w_j 
              +s \otimes \frac{\partial w_j}{\partial\bar{z}} \right\|_0
          \leq c\left(\frac{1}{k}+\delta\right)\|s_j\|_0.
\]
We may now fix $k_0$ large enough that $c/k_0\leq \epsilon/2$ and then choose $\delta$ small enough that 
$c\delta\leq\epsilon/2$ and we get $\|\partial s_j/\partial\bar{z}\|\leq \epsilon\|s_j\|$ as claimed. 

Finally we observe
that the matrix $\langle s_j,s_k\rangle_0$ is equal to $\langle w_j,w_k\rangle_0$ because $s$ has unit length. Since this
matrix is invariant under the lattice $\Lambda$ its eigenvalues are bounded above and below independent of $k$. Finally
since the metrics $\langle\cdot,\rangle_0$ and $\langle\cdot,\cdot\rangle$ are equivalent over $M$ the eigenvalues are
bounded above and below for the given metric.

\end{proof}

Now we will use the covering stability assumption which implies that for any positive integer $k$ and any section
of the lifted bundle $E$  to $M_k$ we have for any section $s$ of $E$ the inequality
\[ \int_{M_k} \|(\partial s)^\top\|^2\ da\leq \int_{M_k}\|(\bar{\partial}s)^\perp\|^2\ da.
\]
We now prove our second main theorem of this section.
\begin{theorem} \label{theorem:degree-zero}
Suppose $E$ is an indecomposable degree zero sub-bundle of the extended complexified normal bundle of $M$.
If $M$ is covering stable it follows that the second fundamental form term $(\partial s)^\top=0$ for any section $s$
of $E$.
\end{theorem}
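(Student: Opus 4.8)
The plan is to feed the approximately holomorphic frame produced by Theorem~\ref{theorem:sections} into the covering stability inequality of Lemma~\ref{lemma:stability-compactified}, and to convert the resulting ``small $\bar\partial$ on a large cover'' into exact vanishing on $M$ by using how the relevant measures scale under the coverings $\pi_{k}\colon M_{k}\to M$.

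\textbf{Reduction.} The assignment $s\mapsto(\partial s)^{\top}$ is tensorial: $(\partial s)^{\top}$ is the image of $s$ under a bundle map $\mathcal B$ which depends only on the pointwise value of $s$, namely the complexified, $(1,0)$-form-valued version of the second fundamental form of $\Sigma$. Hence $\|\mathcal B|_{E}\|^{2}_{\mathrm{HS}}$ is bounded by a constant multiple of $|\mathrm{II}|^{2}$, so $\|\mathcal B|_{E}\|^{2}_{\mathrm{HS}}\,da$ is a finite measure on $M$ because $\Sigma$ has finite total curvature, and the assertion ``$(\partial s)^{\top}=0$ for all $s\in\Gamma(E)$'' amounts to $\mathcal B|_{E}=0$, i.e.\ to $\int_{M}\|\mathcal B|_{E}\|^{2}_{\mathrm{HS}}\,da=0$. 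Note that $E\subset\mathcal N\otimes\mathbb C$ inherits a Hermitian metric compatible with its holomorphic structure, so Theorem~\ref{theorem:sections} applies to it.

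\textbf{The estimate on a cover.} Fix $\epsilon>0$. By Theorem~\ref{theorem:sections} there is a $k$ (with $\deg\pi_{k}=k^{2}$) and a frame $s_{1},\dots,s_{r}$ of $E_{k}=\pi_{k}^{*}E$ with $\|(\bar\partial s_{j})^{\perp}\|\le\epsilon\,\|s_{j}\|\,\|d\bar z\|$ pointwise, and with Gram matrix $G=(\langle s_{j},s_{l}\rangle)$ satisfying $c^{-1}I\le G\le cI$ for a constant $c$ independent of $k$. Applying the covering stability inequality (Lemma~\ref{lemma:stability-compactified}) to each $s_{j}$ on $M_{k}$ and summing over $j$ gives
\[
\sum_{j}\int_{M_{k}}\|(\partial s_{j})^{\top}\|^{2}\,da\ \le\ \epsilon^{2}\int_{M_{k}}\Big(\sum_{j}\|s_{j}\|^{2}\Big)\|d\bar z\|^{2}\,da .
\]
On the right, $\sum_{j}\|s_{j}\|^{2}=\operatorname{tr}G\le rc$, and $\|d\bar z\|^{2}\,da$ is a constant multiple of the flat area element of the torus $M$; since that element pulls back to $M_{k}$ with multiplicity $k^{2}$, the right side is at most $C_{1}\epsilon^{2}k^{2}$ with $C_{1}$ independent of $k$. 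On the left, expanding each $s_{j}$ in a local unitary frame of $E_{k}$ and using that $\mathcal B$ is linear together with $G\ge c^{-1}I$, elementary linear algebra gives the pointwise bound $\sum_{j}\|(\partial s_{j})^{\top}\|^{2}=\sum_{j}\|\mathcal Bs_{j}\|^{2}\ge c^{-1}\|\mathcal B|_{E_{k}}\|^{2}_{\mathrm{HS}}$; since $\mathcal B|_{E_{k}}$, the bundle metric and the area element on $M_{k}$ are all pullbacks of the corresponding objects on $M$, the left side is at least $c^{-1}k^{2}\int_{M}\|\mathcal B|_{E}\|^{2}_{\mathrm{HS}}\,da$. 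Combining and dividing by $k^{2}$ yields $c^{-1}\int_{M}\|\mathcal B|_{E}\|^{2}_{\mathrm{HS}}\,da\le C_{1}\epsilon^{2}$, and letting $\epsilon\to0$ forces $\int_{M}\|\mathcal B|_{E}\|^{2}_{\mathrm{HS}}\,da=0$, which is the assertion.

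\textbf{Main obstacle.} The difficulty is that Theorem~\ref{theorem:sections} only yields \emph{approximately} holomorphic sections, and only after passing to a cover whose degree $k^{2}$ must grow as $\epsilon\to0$; a crude use of stability would then bound a quantity of size $\sim k^{2}$ by one of size $\sim\epsilon^{2}k^{2}$, which is empty for fixed $\epsilon$. The point is to organize the estimate so that the factor $k^{2}$ appears on both sides and cancels, which is exactly what the $k$-independent two-sided Gram bound in Theorem~\ref{theorem:sections} makes possible. The remaining care is bookkeeping near the punctures: one must check that $\|\mathcal B|_{E}\|^{2}_{\mathrm{HS}}\,da$ really is a finite measure on $M$ (this is where finite total curvature enters) and that it, together with the flat area element, pulls back under $\pi_{k}$ with multiplicity equal to the degree of the cover.
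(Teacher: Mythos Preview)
Your proof is correct and rests on the same three ingredients as the paper's: the almost holomorphic frame with $k$-independent Gram bounds from Theorem~\ref{theorem:sections}, the covering stability inequality of Lemma~\ref{lemma:stability-compactified}, and the $k^{2}$ scaling that lets one cancel the size of the cover and pass to the limit $\epsilon\to0$.

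The organization, however, is genuinely different. The paper proceeds by induction along the Atiyah filtration $E_{1}\subset E_{2}\subset\cdots\subset E_{r}$: at each step it fixes a unit section $s_{0}$ of $E_{j}$ orthogonal to $E_{j-1}$, lifts it to $M_{k}$, and compares $\int_{M_{k}}\|(\partial s_{0})^{\top}\|^{2}\,da$ (which is $\ge ck^{2}$ if the second fundamental form is nonzero on $E_{j}$) with $\int_{M_{k}}\|(\partial s_{j})^{\top}\|^{2}\,da\le c\epsilon k^{2}$; the inductive hypothesis ($\mathcal B|_{E_{j-1}}=0$) is used to reduce $(\partial s_{j})^{\top}$ to a scalar multiple of $(\partial s_{0})^{\top}$, and the lower Gram bound is used only to bound that scalar from below. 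You instead exploit the two-sided Gram bound in one stroke, via the pointwise linear-algebra inequality $\sum_{j}\|\mathcal Bs_{j}\|^{2}\ge c^{-1}\|\mathcal B|_{E}\|_{\mathrm{HS}}^{2}$, and obtain the vanishing of the full Hilbert--Schmidt norm without ever touching the filtration. Your route is shorter and avoids the step-by-step contradiction; the paper's route makes more visible \emph{where} the Gram control is actually needed (only on the ``new'' direction at each stage) and ties the argument explicitly to Atiyah's structure theory. Both are valid, and both rely in an essential way on the $k$-independence of the Gram bounds in Theorem~\ref{theorem:sections}.
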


\begin{proof} We begin with the degree zero line sub-bundle $E_1$ and observe that if we choose a unit section $s_0$
of $E_1$ it follows that if $(\partial s_0)^\top$ is not identically zero then there is a constant $c>0$ so that
\[ \int_{M_k}\|(\partial s_0)^\top\|^2\ da=k^2\int_M \|(\partial s_0)^\top\|^2\ da\geq ck^2
\]
where we lift $s_0$ to a section over $M_k$. Let $s_1$ be the almost holomorphic section of the lift of $E_1$
over $M_k$ from Theorem \ref{theorem:sections} and note that for $k$ large we have 
\[ \int_{M_k}\|(\bar{\partial}s_1)^\perp\|^2\ da\leq \epsilon \int_{M_k}\|s_1\|^2\ da\leq c\epsilon k^2.
\]
By the covering stability condition it follows that
\[  \int_{M_k}\|(\partial s_1)^\top\|^2\ da\leq c\epsilon k^2.
\]
Since $E_1$ is a line bundle and $s_1$ has length bounded from below independent of $k$ it follows that
\[ \int_{M_k}\|(\partial s_1)^\top\|^2\ da\geq ck^2
\]
which is a contradiction for small enough $\epsilon$. Therefore we conclude that $(\partial s)^\top=0$ for sections
$s$ of $E_1$.

We now proceed step by step through the filtration. Assume that for some $j>1$ we have $(\partial s)^\top=0$ 
for sections $s$ of $E_{j-1}$.  Let $s_0$ be a unit section of $E_j$ which is orthogonal to $E_{j-1}$. If $(\partial s)^\top$ is not identically zero for sections $s$ of $E_j$ then there is a constant $c>0$ so that
\[ \int_{M_k}\|(\partial s_0)^\top\|^2\ da\geq ck^2
\]
where we lift $s_0$ to a section over $M_k$. We now use the stability assumption for the almost holomorphic section
$s_j$ to conclude that 
\[ \int_{M_k}\|(\partial s_j)^\top\|^2\ da\leq c\epsilon k^2.
\]
Now if we write $s_j=\alpha s_0+\sum_{l=1}^{j-1}c_ls_l$ for complex numbers $\alpha$ and $c_l$, it follows from the 
bound on the metric that 
\[ 
     |\alpha|^2=\|s_j-\sum_{l=1}^{j-1}c_ls_l\|^2
     \geq c_0 \|s_j-\sum_{l=1}^{j-1}c_ls_l\|_0^2
     \geq c(1+\sum_{l=1}^{j-1}|c_l|^2)
     \geq c
\]
for positive constants $c$, $c_0$, where in the first inequality we used that all the metrics are uniformly equivalent for small $\delta$. It follows that $\|(\partial s_j)^\top\|^2=|\alpha|^2\|(\partial s_0)^\top\|^2$ since the second
fundamental form vanishes on $E_{j-1}$ and so 
\[  \int_{M_k}\|(\partial s_j)^\top\|^2\ da\geq ck^2.
\]
This contradiction shows that if $(\partial s)^\top=0$ for sections $s$ of $E_{j-1}$ then the same is true for sections of
$E_j$. This shows that the second fundamental form vanishes on $E$ and completes the proof.
\end{proof}

\section{Covering stable surfaces of genus one in $\mathbb{R}^n$}

In this section we assume that $\Sigma$ is a complete minimal surface of finite total curvature given by a conformal branched minimal immersion $F: M_0 \rightarrow \mathbb{R}^n$. $M_0$ conformally equivalent to a compact Riemann surface $M$ 
with a finite number of points removed. We let $E=\mathcal{N} \otimes \mathbb{C}$ where $\mathcal{N}$ is the extension of the normal bundle of $M_0$ to the compactified surface $M$. Let $(\cdot,\cdot)$ denote the complex bilinear extension of the metric on $\mathcal{N}$ to $E$. There is a unique holomorphic structure on $E$ such that a section $s \in \Gamma(E)$ is holomorphic if $(\partial_{\bar{z}}s)^{\perp}=0$ (\cite{KM}). 
Since $E$ is a self-dual holomorphic vector bundle over the compact surface $M$, $E$ admits a direct sum decomposition $E=P \oplus Z \oplus N$ where $P=\oplus_i P_i $, $Z=\oplus_j Z_j$ and  $N=\oplus_i P_i^*$ with $P_i$, $Z_j$ indecomposable positive and zero subbundles.  The following propositions gives conditions which imply that $\Sigma$ is holomorphic.

\begin{proposition} \label{proposition:holomorphic}
Let $\Sigma$ be a complete minimal surface in $\mathbb{R}^n$ of finite total curvature. Suppose that:
\begin{enumerate}
\item[(i)]
For all $s \in \Gamma(P \oplus Z)$ we have $(\partial s)^\top=0$
\item[(ii)]
$H^0(P^* \otimes (P \oplus Z)^*)=\{0\}$
\end{enumerate}
If $\Sigma$ lies fully in $\mathbb{R}^n$, then $Z=\{0\}$ and $\Sigma$ is holomorphic with respect to an orthogonal complex structure on $\mathbb{R}^n$.
\end{proposition}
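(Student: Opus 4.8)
I would identify the normal second fundamental form with a holomorphic section of $E$ that lies in $P$, propagate this through all the higher fundamental forms, and use the result to exhibit the span of $F_z$ as a \emph{constant} isotropic subspace $W\subseteq\mathbb{C}^n$ containing $T^{1,0}\Sigma$; such a $W$ is exactly the $(1,0)$-space of an orthogonal complex structure for which $\Sigma$ is holomorphic. The first bookkeeping is a dictionary. Writing $F_z=\partial F/\partial z$, set $\beta:=(F_{zz})^\perp$; since $\Sigma$ is minimal ($F_{z\bar z}=0$) and conformal ($(F_z,F_z)=0$), $\beta\,dz^2$ is a holomorphic section of $E\otimes K_M^{\,2}$, and as $M$ has genus one we fix a nowhere-zero holomorphic one-form and regard $\beta\in H^0(M,E)$. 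Differentiating the identities $(s,F_z)=(s,F_{\bar z})=0$ for a normal section $s$ gives
\[
   (\partial s)^\top=-\frac{(s,\beta)}{(F_z,F_{\bar z})}\,F_{\bar z}\otimes dz,\qquad
   (\bar\partial s)^\top=-\frac{(s,\bar\beta)}{(F_z,F_{\bar z})}\,F_z\otimes d\bar z,
\]
where $\bar\beta=(F_{\bar z\bar z})^\perp$. In particular hypothesis (i) is equivalent to the pointwise condition $\beta(p)\perp(P\oplus Z)_p$.

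\textbf{Locating $\beta$ in $P$.} Every summand of $N=\oplus_iP_i^*$ has negative degree, so $H^0(N)=0$ and $\beta\in H^0(P\oplus Z)$. Because $E$ is self-dual and the indecomposable summands of $P$ and $Z$ are semistable of slope $>0$ and $=0$ respectively, the bilinear form vanishes on $P$ (this is the content of (ii): $H^0(P^*\otimes(P\oplus Z)^*)=0$), pairs $P$ perfectly with $N$, and — after replacing $N$ by an isomorphic complement — restricts to a nondegenerate form on $Z$ with $Z\perp(P\oplus N)$. Combining $\beta\in H^0(P\oplus Z)$ with $\beta\perp(P\oplus Z)$ and nondegeneracy on $Z$ forces the $Z$-component of $\beta$ to vanish; hence $\beta\in H^0(P)$ and in particular $(\beta,\beta)\equiv0$.

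\textbf{The higher fundamental forms stay in $P$ (the crux).} Put $\beta_1:=\beta$ and $\beta_{j+1}:=\nabla^\perp_{\partial_z}\beta_j$, where $\nabla^\perp$ is the normal connection, whose $(0,1)$-part is the $\bar\partial$-operator of $E$. I claim $\beta_j\in H^0(P)$ for all $j$, by induction. Given $\beta_j\in H^0(P)$: a direct computation with the second fundamental form shows $S_{\beta_j}\partial_{\bar z}=0$ (using minimality and $(\beta,\beta_j)=0$), and the Ricci equation then gives $R^\perp(\partial_{\bar z},\partial_z)\beta_j=0$; hence $\nabla^\perp_{\bar z}\beta_{j+1}=\nabla^\perp_z\nabla^\perp_{\bar z}\beta_j+R^\perp(\partial_{\bar z},\partial_z)\beta_j=0$, so $\beta_{j+1}$ is holomorphic and (as $H^0(N)=0$) lies in $H^0(P)\oplus H^0(Z)$. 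Moreover the $\mathcal{O}_M$-linear bundle map $\pi_Z\circ\nabla^\perp_{\partial_z}\colon P\to Z\otimes K_M$ is holomorphic: for a section $s$ of $P$ one has $\nabla^\perp_{\bar z}(\pi_Z\nabla^\perp_zs)=\pi_Z\,R^\perp(\partial_{\bar z},\partial_z)s$, and $R^\perp(\partial_{\bar z},\partial_z)s$ has no $Z$-component since every shape operator $S_\eta$ with $\eta$ a section of $Z$ vanishes (the second fundamental form pairs trivially with $Z$) and $Z\perp(P\oplus N)$. Since $H^0(\mathrm{Hom}(P,Z)\otimes K_M)=H^0(P^*\otimes Z)=0$ (semistability, negative slope), this map is zero, so $\beta_{j+1}$ has no $Z$-component either and $\beta_{j+1}\in H^0(P)$. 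This step is the main obstacle: holomorphicity of the iterated derivatives rests on the Ricci equation together with the degeneracy of the shape operators of sections of $P$ and $Z$ (where minimality and (ii) enter), and confinement to $P$ uses the vanishing $H^0(P^*\otimes Z)=0$.

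\textbf{Assembling the complex structure.} Let $U\subseteq E$ be the (finite) span of the $\beta_j$; by the previous step it is a holomorphic subbundle of $P$, hence isotropic. A short induction using $(\partial\beta_j)^\top=0$ shows $(\partial_z^kF)^\perp\in\mathrm{span}\{\beta_1,\dots,\beta_{k-1}\}$, so $\mathrm{span}_{\mathbb{C}}\{F_z(p):p\in M_0\}=(T^{1,0}\Sigma\oplus U)|_{p_0}$ for any $p_0$. Now $W:=T^{1,0}\Sigma\oplus U\subseteq\underline{\mathbb{C}^n}$ is parallel for the flat connection: for sections of $T^{1,0}\Sigma=\mathbb{C}F_z$ one uses $F_{zz}=\bigl(\partial_z\log(F_z,F_{\bar z})\bigr)F_z+\beta_1\in W$, and for sections $s$ of $U$ one uses that $(\partial s)^\top=0$ (as $U$ is isotropic and contains $\beta_1$), that $\nabla^\perp_zs\in\Gamma(U)$ by construction, and that $(\bar\partial s)^\top$ is a multiple of $F_z$ while $\bar\partial_Es\in\Gamma(U)$. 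Thus $W$ is a constant subspace of $\mathbb{C}^n$, equal to $\mathrm{span}\{F_z(p)\}$, and it is isotropic, so $\dim W\le n/2$. Since $\Sigma$ lies fully in $\mathbb{R}^n$ we have $W+\overline W=\mathbb{C}^n$, whence $\dim W=\dim\overline W=n/2$ (so $n$ is even) and $W\cap\overline W=0$; the operator $J$ equal to $i$ on $W$ and $-i$ on $\overline W$ is then a constant orthogonal complex structure on $\mathbb{R}^n$, and $T^{1,0}\Sigma\subseteq W$ says precisely that $\Sigma$ is $J$-holomorphic. Finally $J$ preserves $N_{\mathbb{R}}\Sigma$, so $E=N^{1,0}\oplus N^{0,1}$ with $N^{0,1}=\overline{N^{1,0}}$ and $N^{1,0}=E\cap W=U\subseteq P$; comparing ranks, $\mathrm{rk}\,P+\tfrac12\mathrm{rk}\,Z=\tfrac12\mathrm{rk}\,E=\mathrm{rk}\,N^{1,0}\le\mathrm{rk}\,P$, which forces $Z=0$ (and $E=P\oplus P^*$).
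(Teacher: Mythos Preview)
Your overall strategy—confine the normal osculating data to $P$, build a parallel isotropic $W\subseteq\mathbb{C}^n$, and read off $J$—is sound and close in spirit to the paper's argument. But the inductive step you flag as ``the crux'' contains a genuine error. The claim $S_{\beta_j}\partial_{\bar z}=0$ is false: by your own dictionary, $(\bar\partial s)^\top$ is governed by $(s,\bar\beta)=\langle s,\beta\rangle_H$, not by $(s,\beta)$. A direct computation of the normal curvature gives
\[
R^\perp(\partial_{\bar z},\partial_z)s \;=\; \frac{(s,\beta)}{(F_z,F_{\bar z})}\,\bar\beta \;-\; \frac{(s,\bar\beta)}{(F_z,F_{\bar z})}\,\beta ,
\]
so for $s=\beta_1\in P$ (where $(\beta_1,\beta)=0$) one finds $\nabla^\perp_{\bar z}\beta_2=R^\perp(\partial_{\bar z},\partial_z)\beta_1=-\dfrac{\|\beta\|_H^{2}}{(F_z,F_{\bar z})}\,\beta$, which is nonzero unless $\Sigma$ is a plane. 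Hence $\beta_2$ is \emph{not} holomorphic, and the deduction ``$\beta_{j+1}\in H^0(P)\oplus H^0(Z)$ since $H^0(N)=0$'' collapses. (Your side claim that $S_\eta\equiv 0$ for $\eta\in\Gamma(Z)$ has the same defect: it would need $(\bar\beta,\eta)=\langle\beta,\eta\rangle_H=0$, which is not given.)

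The repair is that holomorphicity of the individual $\beta_j$ is unnecessary: your bundle-map trick works for $\pi_N$ just as it does for $\pi_Z$. Since $R^\perp(\partial_{\bar z},\partial_z)s\in\mathbb{C}\beta\subseteq P$ for every $s\in\Gamma(P)$, both $\pi_Z\circ\nabla^\perp_{\partial_z}\big|_P$ and $\pi_N\circ\nabla^\perp_{\partial_z}\big|_P$ are holomorphic bundle maps; they vanish because $H^0(P^*\otimes Z)=H^0(P^*\otimes N)=0$ (every summand is semistable of negative degree on the elliptic curve). Thus $\nabla^\perp_{\partial_z}$ preserves $\Gamma(P)$, all $\beta_j$ lie in $P$ (though not holomorphically for $j\geq 2$), $U$ is closed under both $\nabla^\perp_{\partial_z}$ and $\nabla^\perp_{\partial_{\bar z}}$, and your $W=T^{1,0}\oplus U$ is $d$-parallel as claimed.

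For comparison, the paper avoids tracking the $\beta_j$ altogether by working with the larger bundle $P\oplus Z$. It shows that $B(s,t)=(\partial_z t,s)$, for $s\in\Gamma(P)$ and $t\in\Gamma(P\oplus Z)$, is a holomorphic section of $P^*\otimes(P\oplus Z)^*$ (this uses (i)), hence zero by (ii); consequently $\nabla^\perp_{\partial_z}t\in P^\perp=P\oplus Z$ for all $t\in\Gamma(P\oplus Z)$, and $\xi=P\oplus Z\oplus T^{1,0}$ is $d$-closed, hence a constant subspace $\Lambda\subseteq\mathbb{C}^n$. One then computes $\Lambda\cap\bar\Lambda=Z_q\cap\bar Z_q$ and a dimension count forces $Z=\bar Z$, so $Z$ is the complexification of a constant real normal subspace, contradicting fullness unless $Z=0$. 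This is shorter and uses (ii) exactly once more (for $B$), whereas your route needs the extra vanishing $H^0(P^*\otimes Z)=H^0(P^*\otimes N)=0$ but rewards you with a smaller $W$ from the outset.
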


\begin{proof}
We divide the proof into the following steps.
\\

\noindent
\underline{\em Step 1.} $P^\perp=P \oplus Z$, where $P^\perp$ denotes the orthogonal complement of $P$ in $E$ with respect to the complex bilinear pairing $(\cdot,\cdot)$. \\

Define $A: \Gamma(P) \times \Gamma(P \oplus Z) \rightarrow C^\infty(M,\mathbb{C})$ by $A(s,t)=(s,t)$. Then,
\[
     \der{\bar{z}}(A(s,t))=\der{\bar{z}}(s,t)=(\partial_{\bar{z}}^\perp s,t)+(s,\partial_{\bar{z}}^\perp t)
\]
but,
\begin{align*}
    \der{\bar{z}}(A(s,t))&=(\partial_{\bar{z}}^\perp A)(s,t) +A(\partial_{\bar{z}}^\perp s, t)+A(s, \partial_{\bar{z}}^\perp t) \\
    &=(\partial_{\bar{z}}^\perp A)(s,t) +(\partial_{\bar{z}}^\perp s, t)+(s, \partial_{\bar{z}}^\perp t)
\end{align*}
and so $(\partial_{\bar{z}}^\perp A)(s,t)=0$ for all $s \in  \Gamma(P)$, $t \in \Gamma(P \oplus Z)$. Therefore, $A \in H^0(P^* \otimes (P \oplus Z)^*)$, and by assumption (ii), $A \equiv 0$. It follows that $P \oplus Z \subset P^\perp$.
But $\dim P^\perp=\dim E - \dim P=\dim (Z \oplus P^*)=\dim (Z \oplus P)$, and so we must have $P^\perp=P\oplus Z$.
\\

\noindent
\underline{\em Step 2.} $P|_{M_0} \oplus Z|_{M_0} \oplus T^{1,0}_{\mathbb{C}}M_0=M_0 \times \Lambda$, where $\Lambda$ is a constant subspace of $\mathbb{C}^n$. \\

Define $B: \Gamma(P) \times \Gamma(P \oplus Z) \rightarrow C^\infty(M,\mathbb{C})$ by $B(s,t)=(\partial_z t, s)$. Using Step 1, note that  $B$ is $C^\infty(M,\mathbb{C})$-bilinear and defines a section of $P^* \otimes (P\oplus Z)^*$. We claim that $B \in H^0(P^*  \otimes (P \oplus Z)^*)$. To see this, from the definition of $B$ we have
\begin{align} \label{equation:B-deriv}
      \der{\bar{z}} (B(s,t))
      &=(\partial_{\bar{z}} \partial_z t, s) + (\partial_z t, \partial_{\bar{z}} s).
\end{align}
On the other hand, 
\begin{align*}
      \der{\bar{z}} (B(s,t))
      &=(\partial_{\bar{z}}^\perp B)(s,t) +B(\partial_{\bar{z}}^\perp s,t) +B(s,\partial_{\bar{z}}^\perp t) \\
      &= (\partial_{\bar{z}}^\perp B)(s,t) +(\partial_z t,\partial_{\bar{z}}^\perp s) 
      +(\partial_z  \partial_{\bar{z}}^\perp t, s).
\end{align*}
We have
\[
    \partial_{\bar{z}}^\perp  t = \partial_{\bar{z}} t -  \partial^\top_{\bar{z}} t.
\]
On $M_0$, $\partial^\top_{\bar{z}} t = \alpha F_z + \beta F_{\bar{z}}$, for some smooth functions $\alpha$, $\beta$, and 
\[
   (\partial_z \partial^\top_{\bar{z}} t,s) 
   = (\alpha_z F_z +\beta_z F_{\bar{z}} + \alpha F_{zz} + \beta F_{\bar{z} z}, s)
   = (\alpha F_{zz}, s) = -  \alpha (F_z, \partial^\top_z s)
   =0
\]
by assumption (i). Therefore,
\begin{align*}
      \der{\bar{z}} (B(s,t))
      &=(\partial_{\bar{z}}^\perp B)(s,t) +(\partial_z t,\partial_{\bar{z}} s) 
      +(\partial_z  \partial_{\bar{z}} t, s) \\
      &=(\partial_{\bar{z}}^\perp B)(s,t) +(\partial_z t,\partial_{\bar{z}} s) 
      +(\partial_{\bar{z}}  \partial_z t, s),      
\end{align*}
which together with (\ref{equation:B-deriv}) implies that  $\partial_{\bar{z}}^\perp B=0$ on $M_0$. It follows that $B \in H^0(P^*  \otimes (P \oplus Z)^*)$, as claimed. By assumption (ii), $B \equiv 0$.

Let $\xi=P|_{M_0} \oplus Z|_{M_0} \oplus T^{1,0}_{\mathbb{C}}M_0$. We now show that $\xi$ is closed under $d$, $d: \Gamma(\xi) \rightarrow \Gamma( \xi  \otimes T^*M_0)$. We have $(dF_z , F_z)=\frac{1}{2}d(F_z, F_z)=0$ since $F$ is conformal, and $(dF_z  ,s)=-(F_z  , \, \partial s)=0$ for all $s \in \Gamma(P \oplus Z)$ by the minimality of $F$ and assumption (i). If $t \in \Gamma(P \oplus Z)$, then $(\partial t , s)=0$ for all $s \in \Gamma(P)$ since $B \equiv 0$ from above. Therefore, $\partial t \in \Gamma(P^\perp \otimes T^*M_0)=\Gamma((P \oplus Z) \otimes T^*M_0)$, by {\em Step 1}. If $t \in \Gamma(P \oplus Z)$, then $(\bar{\partial} t , F_z)=-( t \, d\bar{z} , F_{z\bar{z}})=0$ by the minimality of $F$, and so $(\bar{\partial} t)^\top \in \Gamma (T^{1,0}_\mathbb{C}M_0 \otimes T^*M_0)$. Finally, $P \oplus Z$ is preserved under $\bar{\partial}^\perp$ since it is a holomorphic sub-bundle of $E$. Therefore, $\xi$ is closed under $d$, and $\xi=M_0 \times \Lambda$ where $\Lambda$ is a subspace of $\mathbb{C}^n$.
\\

\noindent
\underline{\em Step 3.} If $\Sigma$ lies fully in $\mathbb{R}^n$, then $Z=\{0\}$.  \\

Observe that $\Lambda \cap \bar{\Lambda}  = (P_q \oplus Z_q) \cap (\bar{P}_q \oplus \bar{Z}_q)$ for any $q \in M_0$. It follows from {\em Step 1} that $(s_1 , s_2)=0$ for all $s_1, \, s_2 \in \Gamma(P)$, and so $P$ is orthogonal to $\bar{P}$ with respect to the Hermitian inner product. Therefore, $\Lambda \cap \bar{\Lambda}=Z_q  \cap \bar{Z}_q$. Let $p=\mbox{rank}\,P$ and $z=\mbox{rank} \,Z$. Note that $\Lambda+ \bar{\Lambda}=\mathbb{C}^n$ and so
\begin{align*}
     \dim (Z_q \cap \bar{Z}_q) &= \dim (\Lambda \cap \bar{\Lambda}) 
     =\dim \Lambda + \dim \bar{\Lambda} - \dim (\Lambda + \bar{\Lambda}) \\
     &= (p+z+1)+(p+z+1) -n = 2p+2z-(n-2) \\
     &=z
\end{align*}
where in the last equality we used that $n-2=2p+z$ since $E=P \oplus Z \oplus P^*$. Therefore, $Z=\bar{Z}$, and $Z$ is the complexification of a sub-bundle $M_0 \times W$ of the normal bundle $NM_0$, where  $W$ is a constant subspace of $\mathbb{R}^n$. This means that $\Sigma=F(M_0)$ lies in an affine subspace of $\mathbb{R}^n$ orthogonal to $W$. Since $\Sigma$  lies fully in $\mathbb{R}^n$, $Z=\{0\}$.
\\

We may define complex structure $J$ on $\mathbb{R}^n$ by $J=iI$ on $P|_{M_0} \oplus T^{1,0}_{\mathbb{C}}M_0$ and $J=-iI$ on $\bar{P} \oplus T^{0,1}_{\mathbb{C}}M_0$. Since $P|_{M_0} \oplus T^{1,0}_{\mathbb{C}}M_0$ and $\bar{P} \oplus T^{0,1}_{\mathbb{C}}M_0$ are orthogonal with respect to the Hermitian inner product, this defines a complex structure, and $\Sigma$ is $J$-holomorphic (see \cite[Theorem A]{Mi}).
\end{proof}

We now prove the main theorem.

\begin{theorem}
A complete oriented covering stable genus one surface $\Sigma$ of finite total curvature in $\mathbb{R}^n$ lies in an even dimensional affine subspace and is holomorphic with respect to an orthogonal complex structure on that subspace.
\end{theorem}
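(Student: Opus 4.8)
The plan is to run the bundle-decomposition machinery of Sections \ref{section:stability} and \ref{section:bundles} on a suitable finite cover of $\Sigma$ and then invoke Proposition \ref{proposition:holomorphic}. I would first reduce to the case in which $\Sigma$ lies fully in $\mathbb R^n$: otherwise replace $\mathbb R^n$ by the smallest affine subspace containing $\Sigma$, which affects neither covering stability, nor finite total curvature, nor the genus, and it suffices to treat the full case and show that the dimension is even. So write $\Sigma=F(M_0)$ with $M_0$ a genus one Riemann surface with finitely many punctures, compactifying to $M=\mathbb C/\Lambda$, and set $E=\mathcal N\otimes\mathbb C$. Since $E$ is self-dual, as in \eqref{equation:decomposition} it decomposes as $E=P\oplus Z\oplus P^*$ into indecomposable summands of positive, zero, and negative degree, with $P=\oplus_i P_i$ and $Z=\oplus_j Z_j$.

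Next I would fix the cover. Applying Lemma \ref{lemma:splitting-covers} to the tower $\cdots\to M_k\to\cdots\to M_1=M$ with $M_k=\mathbb C/k\Lambda$, there is a level $K$ beyond which the indecomposable decomposition of the lifted bundle stabilizes, every further cover merely lifting it; relabel so that over $M_K$ the bundle is $P\oplus Z\oplus P^*$ with this stabilized decomposition. Then pass to a further cover $M_k$ with $k$ so large that each $\pi_k^*P_i$ has degree greater than its rank plus two. Being lifts from the stabilized decomposition these $\pi_k^*P_i$ are still indecomposable, so Lemma \ref{lemma:global-generation} shows each of them, hence $\pi_k^*P$, is globally generated by holomorphic sections each having a zero. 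The two cover reductions are compatible — the second simply refines the first — and $M_k$ inherits covering stability while the corresponding covering surface still lies fully in $\mathbb R^n$.

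It then remains to verify hypotheses (i) and (ii) of Proposition \ref{proposition:holomorphic} over $M_k$. Hypothesis (ii), that $H^0(P^*\otimes(P\oplus Z)^*)=\{0\}$, is exactly Lemma \ref{lemma:condition2} given that $P$ is globally generated by holomorphic sections with zeros. For hypothesis (i), the key observation is that $(\partial\,\cdot\,)^\top$ is \emph{tensorial} in the section: on $M_0$ one has $(\partial_z s)^\top$ proportional to $(s,F_{zz})\,F_{\bar z}$, using conformality and $F_{z\bar z}=0$, so it depends only on the pointwise value of $s$. Now any holomorphic section $\sigma$ of $E$ satisfies $(\bar\partial\sigma)^\perp=0$, hence by Lemma \ref{lemma:stability-compactified} (the inequality \eqref{equation:stability-compactified} over $M_k$) also $(\partial\sigma)^\top=0$; since such $\sigma$ span $P$ and $(\partial\,\cdot\,)^\top$ is tensorial, $(\partial s)^\top=0$ for all $s\in\Gamma(P)$. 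For the degree zero part, Theorem \ref{theorem:degree-zero} applied to each indecomposable $Z_j$ — a degree zero subbundle of the extended complexified normal bundle over the covering stable surface $M_k$ — gives $(\partial s)^\top=0$ for all $s\in\Gamma(Z_j)$, hence for all $s\in\Gamma(Z)$. Thus (i) holds, and Proposition \ref{proposition:holomorphic} yields $Z=\{0\}$ together with a complex structure $J$ on $\mathbb R^n$ making the covering surface $J$-holomorphic. With $Z=\{0\}$ the subspace $P|_{M_0}\oplus T^{1,0}_{\mathbb C}M_0$ from Step 2 of that proposition is the constant subspace $\Lambda\subset\mathbb C^n$, so $J$ is a \emph{constant} orthogonal complex structure; since $J$-holomorphicity is a pointwise condition on tangent planes it descends from the covering surface to $\Sigma$, and the existence of $J$ forces $n$ to be even. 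Undoing the initial reduction to an affine subspace completes the proof.

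The substantive input here is Theorem \ref{theorem:degree-zero} (the $\epsilon$-almost holomorphic spanning sections of Atiyah bundles on large covers and the ensuing contradiction with covering stability), which I am taking as given; within the present assembly the only points needing care are the coordination of the two cover reductions and the tensoriality of $(\partial\,\cdot\,)^\top$, which is precisely what lets the stability inequality — a priori useful only on holomorphic sections — propagate to all of $\Gamma(P)$, so I would expect the bookkeeping of the covers, rather than any new estimate, to be the main obstacle in this final step.
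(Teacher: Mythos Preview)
Your proposal is correct and follows essentially the same route as the paper: stabilize the indecomposable decomposition on a high enough cover, use Lemma \ref{lemma:global-generation} and Lemma \ref{lemma:stability-compactified} to kill $(\partial s)^\top$ on $P$, invoke Theorem \ref{theorem:degree-zero} for $Z$, and feed both into Proposition \ref{proposition:holomorphic} via Lemma \ref{lemma:condition2}. One small fix: the sequence $M_k=\mathbb C/k\Lambda$ is not a tower (since $(k+1)\Lambda\not\subset k\Lambda$), so to apply Lemma \ref{lemma:splitting-covers} you should take, as the paper does, $M_k=\mathbb C/2^k\Lambda$ (or any genuine nested sequence such as $k!\Lambda$).
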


\begin{proof}
Restricting to a subspace if necessary, we may assume that $\Sigma$ lies fully in $\mathbb{R}^n$.
The extended complexified normal bundle $E$ is a self-dual holomorphic vector bundle over a compact surface $M$ of genus one. Consider the tower of covers $\cdots \rightarrow M_{k+1} \rightarrow M_k \rightarrow \cdots \rightarrow M_2 \rightarrow M_1=M$, where $\pi_k: M_k \rightarrow M$ is the covering of $M$ corresponding to the subgroup $2^k\mathbb{Z} \oplus 2^k \mathbb{Z}$ of the fundamental group of $M$, and let $\Sigma_k$ denote the corresponding cover of $\Sigma$. 

By Lemma \ref{lemma:splitting-covers}, there exists $K$ such that for all $k > K$, the summands in the indecomposable direct sum decomposition of the lift $\pi_k^*E$ of $E$ to $M_k$ are the lifts of the summands in the indecomposable decomposition of $\pi^*_KE$. For all $k >K$, the lift of an indecomposable positive subbundle of $\pi_K^*E$ is indecomposable, and by section \ref{section:bundles}, we may choose $k$ sufficiently large such that it is globally generated by holomorphic sections which each have at least one zero. Suppose $\pi^*_kE=P \oplus Z \oplus P^*$ where $P$ is the direct sum of the indecomposable holomorphic subbundles of positive degree and $Z$ is the direct sum of the indecomposable holomorphic subbundles of degree zero. Then $P$ is globally generated by holomorphic sections which each have at least one zero.
Since $\Sigma$ is covering stable, by Lemma \ref{lemma:stability-compactified}, if $s$ is a holomorphic section of $P$, then
\begin{equation*} 
     \int_{M_k} \|(\partial s)^{\top}\|^2 \;da
    \leq \int_{M_k} \|(\bar{\partial} s)^{\perp}\|^2 \;da=0,
\end{equation*}
and so $(\partial s)^{\top}=0$. Since $P$ is globally generated by holomorphic sections, $(\partial s)^{\top}=0$ for all sections $s$ of $P$.  

By Theorem \ref{theorem:degree-zero} the second fundamental form $(\partial s)^\top=0$ for any section $s$
of $Z$. 

Therefore, we have $(\partial s)^{\top}=0$ for all $s \in \Gamma(P \oplus Z)$. 

By Lemma \ref{lemma:condition2}, $H^0(P^* \otimes (P \oplus Z)^*)=\{0\}$.

We have shown that the hypotheses of Proposition \ref{proposition:holomorphic} are satisfied. Therefore $\Sigma_k$, and hence $\Sigma$, is holomorphic with respect to an orthogonal complex structure on $\mathbb{R}^n$
\end{proof}

\section{Stability in PIC manifolds} \label{pic}
We now consider compact stable surfaces in manifolds with positive curvature on isotropic two-planes (PIC). Let $(N,g)$
be a Riemannian manifold of dimension $n\geq 4$ and recall from \cite{MM} that we may complexify the tangent space and
extend the curvature $(0,4)$ tensor complex multilinearly to define the complex sectional curvatures $K(\Pi)$ for a complex
two dimensional subspace of the complexified tangent space
\[ K(\Pi)=\frac{R(X,Y,\bar{X},\bar{Y})}{\|X\wedge Y\|^2}.
\]
We may then restrict to planes $\Pi$ which are isotropic in the sense that $(X,X)=0$ for all $X\in \Pi$ where we denote by
$(\cdot,\cdot)$ the complex linear pairing which extends the metric $g$. We say that a manifold is PIC if $K(\Pi))>0$ for 
all isotropic two-planes $\Pi$ and for a number $\kappa>0$ we say that $N$ is $\kappa$-PIC if $K(\Pi)\geq \kappa$ for
all isotropic two-planes. To write the stability condition assume that $M$ is a compact Riemann surface and $f:M\to N$
is a conformal parametrization of the minimal surface. The stability condition is then (see \cite{MM})
\[ \int_M [R(s,f_z,\bar{s},\bar{f_z})+\|\nabla_z^\top s\|^2]\ dxdy\leq \int_M \|\nabla_{\bar{z}}^\perp s\|^2\ dxdy
\]
where $s$ is an isotropic section of the complexified normal bundle, $f_z$ denotes the image of $\partial/\partial z$ under $f$, 
$\nabla$ is the induced connection on the pullback
of the tangent bundle of $N$ by $f$, and $z=x+iy$ is a local complex coordinate on $M$. If $N$ is $\kappa$-PIC we may
throw away the second term on the left to obtain the inequality 
\[ \kappa\int_M\|s\|^2\ da\leq \int_M \|\nabla_{\bar{\epsilon}}^\perp s\|^2\ da
\]
where $\epsilon=f_z/\|f_z\|$ and the area form $da=2\|f_z\|^2\ dxdy$.

We want to construct holomorphic or almost holomorphic isotropic sections, so we first suppose that we have a line
bundle $L\subset E$ where $E=T_{\mathbb C}^\perp M$ with its complex structure determined by the normal connection.
We may restrict the complex linear pairing $(\cdot,\cdot)$ to $L$, and we observe that the pairing is either identically $0$
in which case $L$ is isotropic or it is nonzero (hence nondegenerate) away from a finite number of points of $M$. This is
because the paring is holomorphic, so if we choose a local nonzero holomorphic section $s$ of $L$ then $(s,s)$ is a
holomorphic function and thus has isolated zeroes or vanishes identically.
\begin{lemma} \label{pairing} If $L$ has positive degree or is a line bundle of degree $0$ which is not isomorphic to its dual, then $L$ is isotropic. If $L_1$ and $L_2$ are  degree $0$ line bundles with $L_1$ not equivalent to $L_2^*$, then they are orthogonal with respect to the pairing.
If $L$ is a line bundle such that the paring is nondegenerate on $L$ then there is a holomorphic splitting $E=L\oplus E_1$
such that the pairing is also nondegenerate on $E_1$.
\end{lemma}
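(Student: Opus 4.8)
The plan is to deal with the three assertions in turn, all of them resting on the principle that the relevant pieces of the bilinear pairing are holomorphic sections of explicit line bundles whose degrees are known. First I would record two inputs. The restriction to $E=T_{\mathbb C}^\perp M$ of the complex bilinear extension of $g$ is nondegenerate on each fiber, being the complexification of a positive definite form; and since the normal connection $\nabla^\perp$ is metric compatible, $\partial_{\bar{z}}(s,t)=(\nabla_{\bar{z}}^\perp s,t)+(s,\nabla_{\bar{z}}^\perp t)$, so $(s,t)$ is a holomorphic function whenever $s,t$ are holomorphic sections of holomorphic sub-bundles of $E$. Hence, for a holomorphic line sub-bundle $L\subset E$ the quadratic form $s\mapsto(s,s)$ is a holomorphic section of $(L^{\otimes 2})^*$, a line bundle of degree $-2\deg L$; and for two holomorphic line sub-bundles $L_1,L_2\subset E$ the restricted pairing is a holomorphic section of $(L_1\otimes L_2)^*$, of degree $-\deg L_1-\deg L_2$.

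For the first two statements I would then argue by degree. If $\deg L>0$, then $(L^{\otimes 2})^*$ has negative degree, hence no nonzero holomorphic section, so $(s,s)\equiv 0$ and $L$ is isotropic. If $\deg L=0$, then $(L^{\otimes 2})^*$ has degree zero; using the standard fact that a degree zero line bundle over a compact Riemann surface with a nonzero holomorphic section is holomorphically trivial, a non-isotropic $L$ would force $(L^{\otimes 2})^*\cong\mathcal{O}$, that is $L\cong L^*$, contrary to hypothesis, so $L$ is again isotropic. Applying the same reasoning to the section of $(L_1\otimes L_2)^*$: if $L_1,L_2$ have degree zero and are not orthogonal, then $L_1\otimes L_2\cong\mathcal{O}$, i.e. $L_2\cong L_1^*$, again contrary to hypothesis; hence $L_1\perp L_2$.

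For the splitting I would use the hypothesis that the pairing is nondegenerate on $L$ in the strong sense that $(s,s)$ is nowhere zero for a local frame $s$ of $L$, so that $(\cdot,\cdot)$ restricts to a nondegenerate rank one form on each fiber of $L$. Set $E_1=L^\perp$, the fiberwise $(\cdot,\cdot)$-orthogonal complement of $L$. At each point $p$ the linear map $E_p\to L_p^*$ given by $v\mapsto(v,\cdot)|_{L_p}$ has rank one (it is already nonzero on $L_p$), so its kernel $(E_1)_p$ has dimension one less than $\dim E_p$; nondegeneracy on $L$ gives $L_p\cap(E_1)_p=0$, so $E_p=L_p\oplus(E_1)_p$, and $E_1$ is therefore a smooth sub-bundle with $E=L\oplus E_1$ as smooth bundles. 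To see $E_1$ is a holomorphic sub-bundle, take a local section $t$ of $E_1$ and a local holomorphic frame $s$ of $L$; then $0=\partial_{\bar{z}}(t,s)=(\nabla_{\bar{z}}^\perp t,s)$, so $\nabla_{\bar{z}}^\perp t\in E_1$, i.e. $\bar{\partial}_E$ preserves $\Gamma(E_1)$. This yields the holomorphic direct sum $E=L\oplus E_1$. Finally, if $v\in(E_1)_p$ pairs trivially with all of $(E_1)_p$, then since $v\in L_p^\perp$ it also pairs trivially with $L_p$, hence with all of $E_p$, so $v=0$ by nondegeneracy of $(\cdot,\cdot)$ on $E$; thus the pairing is nondegenerate on $E_1$.

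I do not expect a serious obstacle here; the argument is bookkeeping around two standard facts — metric compatibility of $\nabla^\perp$, which makes the pairings holomorphic, and the triviality of a degree zero line bundle carrying a nonzero holomorphic section. The only point that needs a little care is the meaning of nondegeneracy on $L$: it must mean that $(s,s)$ is nowhere vanishing, since this is exactly what makes $L^\perp$ a sub-bundle of locally constant rank rather than merely a subsheaf, and hence what makes $E=L\oplus E_1$ a genuine holomorphic direct sum decomposition.
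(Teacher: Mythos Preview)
Your proof is correct and follows essentially the same approach as the paper: both rest on the fact that the restriction of the complex bilinear pairing to holomorphic line sub-bundles is holomorphic, and then use degree constraints. You package this by viewing the pairing as a holomorphic section of $(L^{\otimes 2})^*$ (respectively $(L_1\otimes L_2)^*$) and invoking the triviality of a degree-zero line bundle with a nonzero section, whereas the paper phrases the same argument in the classical language of meromorphic sections and zero/pole counting; the content is the same. Your treatment of the splitting $E=L\oplus E_1$ is in fact more detailed than the paper's one-line assertion, and your care about the pointwise meaning of nondegeneracy is well placed.
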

\begin{proof} First assume that $L$ has non-negative degree $d$ and the pairing is not identically zero. We may choose a 
meromorphic section $s$ of $L$ whose zeroes and poles are away from the zeroes of the pairing. The meromorphic function
$(s,s)$ then has zeroes and poles of twice the order of those of $s$ and it also has zeroes at the zeroes of the pairing. Thus
if $d>0$ the function has more zeroes then poles and is identically zero so that $L$ is isotropic. If $d=0$ then it follows that
the pairing is either identically zero or nowhere zero. 

If $d=0$ and the pairing is nowhere zero, then it follows that the pairing is nondegenerate. The pairing then gives an isomorphism
of $L$ with its dual. If $L_1$ and $L_2$ are degree $0$ line bundles which are not orthogonal with respect to the pairing then
the pairing must be nondegenerate on $L_1\times L_2$ since the pairing of meromorphic sections is a meromorphic function
which has the same number of zeroes as poles. Thus the pairing defines an isomorphism of $L_1$ with $L_2^*$.

If the pairing is nondegenerate on $L$ we can let $E_1$ be the orthogonal complement bundle of $L$
which is then holomorphic and nondegenerate. 
\end{proof}
We remark that there are $4$ distinct self dual line bundles over a torus corresponding to the order $2$ points in the torus (as an abelian
group) generated by $1,\tau$; that is, the points $0,\ 1/2,\ \tau/2,\ 1/2(1+\tau)$. This is because we associate with a degree $0$ line 
bundle the divisor $\{p\}-\{0\}$ where $p$ is a point of the torus. This correspondence gives a group isomorphism from the degree
$0$ line bundles under tensor product with the torus. Notice that all of these line bundles become trivial when lifted to the four fold
covering corresponding to the sub-lattice spanned by $2$ and $2\tau$.

We now restrict to the case at hand when $M$ is a torus and $f:M\to N$ is stable while $N$ is $\kappa$-PIC for 
some $\kappa>0$. We begin with the following proposition.
\begin{proposition} \label{pic_torus} If $M$ is a stable torus in a PIC manifold $N^n$ with $n\geq 4$, then if $n=4$ or $n\geq 7$, 
the  complexified normal bundle $E$ contains a degree $0$ isotropic line sub-bundle. For $n=5,6$, either $E$ contains a degree $0$
isotropic line sub-bundle or the normal bundle of $M$ splits into a direct sum of degree $0$ line bundles which are orthogonal with respect to
the complex linear pairing. In particular each line bundle is self dual. 
\end{proposition}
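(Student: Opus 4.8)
The plan is to use the self-duality of the complexified normal bundle together with Atiyah's classification of bundles over an elliptic curve, invoking the $\kappa$-PIC stability inequality only through the single remark that a holomorphic isotropic section must vanish. First I would record the structure of $E=T^\perp_{\mathbb C}M$. The normal metric is parallel for the normal connection, so its complex-bilinear extension $(\cdot,\cdot)$ is a holomorphic, fibrewise-nondegenerate symmetric pairing; hence $E\cong E^*$ as holomorphic bundles, $\deg E=0$, and --- exactly as in \eqref{equation:decomposition} --- Atiyah's classification \cite{A} and the Krull--Schmidt theorem give $E=P\oplus Z\oplus P^*$ with $P=\oplus_i P_i$ a sum of indecomposables of positive degree and $Z=\oplus_j Z_j$ a sum of indecomposables of degree $0$. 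The only place stability enters is: $E$ has no nonzero holomorphic isotropic section. Indeed, if $s\in\Gamma(E)$ is holomorphic then $\nabla^\perp_{\bar z}s=0$, so the right-hand side of the $\kappa$-PIC stability inequality vanishes; if also $(s,s)\equiv0$ the inequality forces $\kappa\int_M\|s\|^2\,da\le0$, whence $s\equiv0$. I would then show $P=0$: if $P\neq0$, take an indecomposable summand $P_1$ of positive degree; by \cite{A} it has a nonzero holomorphic section $s$, and $(s,s)$ is holomorphic on the compact surface $M$, hence a constant $c$. If $c\neq0$ then $s$ is nowhere zero and $\mathcal{O}\cdot s$ is a line sub-bundle of $P_1$ on which the pairing is nondegenerate; passing to its orthogonal complement as in the last part of Lemma \ref{pairing} splits $P_1$, contradicting indecomposability. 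So $c=0$, and $s$ is a nonzero holomorphic isotropic section of $E$ --- a contradiction. Hence $E=Z$.

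It then remains to analyse $E=Z=\oplus_j Z_j$ with all $Z_j$ indecomposable of degree $0$. If some $Z_j$ has rank $\ge2$, then by the filtration of \cite{A} it has a distinguished sub-line-bundle of degree $0$, namely the bottom step $\mathcal{L}_j\otimes F_1$ of $Z_j\cong\mathcal{L}_j\otimes F_{r_j}$; this line bundle is isotropic, for otherwise the pairing is nondegenerate on it and, as in Lemma \ref{pairing}, $Z_j$ splits --- so $E$ contains a degree $0$ isotropic line sub-bundle. Otherwise every $Z_j$ is a line bundle and $E=\oplus_{j=1}^{n-2}\mathcal{L}_j$ with each $\deg\mathcal{L}_j=0$: if some $\mathcal{L}_j$ is not isomorphic to its dual it is isotropic by Lemma \ref{pairing}; if two of the summands are isomorphic, the pairing restricted to that rank-$2$ piece is a constant symmetric form on $\mathbb{C}^2$ (times the canonical self-pairing of the self-dual line bundle), and every such form vanishes on some line, again giving a degree $0$ isotropic line sub-bundle. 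In the remaining case the $\mathcal{L}_j$ are pairwise nonisomorphic self-dual line bundles; since a torus has exactly four self-dual line bundles of degree $0$, this forces $n-2\le4$. This is vacuous for $n\ge7$; for $n=5,6$ it is precisely the exceptional alternative (the $\mathcal{L}_j$ are mutually orthogonal, since for $i\neq j$ the bundle $\mathcal{L}_i$ is not isomorphic to $\mathcal{L}_j^*\cong\mathcal{L}_j$, by Lemma \ref{pairing}, and each is self-dual); and for $n=4$ it does not arise, since the rank-$2$ normal bundle is an oriented plane bundle, hence a complex line bundle $\mathcal{M}$ whose normal connection has holonomy in $U(1)$, so $E\cong\mathcal{M}\oplus\mathcal{M}^*$ holomorphically, which cannot be a sum of two distinct self-dual line bundles.

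I expect the main obstacle to be this last case analysis. The reduction to $E=Z$ is soft, but locating the degree $0$ isotropic line sub-bundle in the degree-zero part relies on Atiyah's fine structure theory (the canonical sub-line-bundle of an indecomposable degree-$0$ bundle, and the fact that the sub-line-bundles of a sum of degree-$0$ line bundles are the summands themselves), and the boundary between ``always an isotropic line sub-bundle'' and the exceptional splitting turns on the count of self-dual line bundles together with the exclusion of the two-distinct-self-dual configuration when $n=4$ --- the latter using the complex-line-bundle structure of an oriented plane bundle, so some care about orientability of $N$ is also required. I would expect the bookkeeping of exactly which splittings persist in dimensions $5$ and $6$ to be the most delicate point.
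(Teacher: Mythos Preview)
Your proof follows the same architecture as the paper's: use stability to rule out $P$ (no holomorphic isotropic section exists), invoke the Atiyah filtration to handle indecomposable degree-zero summands of rank $\geq 2$, and reduce the all-line-bundle case to the count of the four self-dual degree-zero line bundles on a torus. The one substantive variation is your treatment of the ``two isomorphic self-dual summands'' situation: the paper passes to the four-fold cover on which $L_1\oplus L_2$ trivializes and builds an explicit isotropic section there, while you argue more directly that the restricted pairing on $\mathcal{L}_i\oplus\mathcal{L}_j\cong L\otimes\mathbb{C}^2$ is (after using $L^{\otimes 2}\cong\mathcal{O}$) a constant symmetric form on $\mathbb{C}^2$, which always has an isotropic line. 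Your route is cleaner, but note a small gap shared with the paper: the isotropic line you produce in $\mathcal{L}_i\oplus\mathcal{L}_j$ is isotropic in $E$ only once you know $\mathcal{L}_i\oplus\mathcal{L}_j$ is $(\cdot,\cdot)$-orthogonal to the remaining summands, and Lemma~\ref{pairing} gives this only when no other $\mathcal{L}_k$ is isomorphic to $L$; if three or more summands are isomorphic you must pool all of them and run the same constant-form argument on $L\otimes\mathbb{C}^m$. For $n=4$ you and the paper both first use the orientation on the normal bundle to get the isotropic $(1,0)$ sub-bundle; the paper then adds a sentence to cover the unoriented case, which you correctly flag as the point needing care.
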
 

\begin{proof} In general for any surface $M$ we have $E=P\oplus Z\oplus P^*$ where $P$ is a direct sum of indecomposable
bundles of positive degree. First we show that $P$ must be trivial. To see this suppose we had an indecomposable
bundle $P_1$ of positive degree. 
We see from the Riemann-Roch theorem that $P_1$ has a holomorphic section $s$. 
We see that $s$ is isotropic because otherwise
it would span a degree $0$ line bundle which splits, and this is not possible since $P_1$ is indecomposable. By stability of
$M$ there can be no holomorphic isotropic section, so we conclude that $P$ is trivial.

Therefore we have shown that $E$ is a direct sum of indecomposable bundles of degree zero. If there is a summand $E_1$
of rank $d_1>1$, then we know from \cite{A} that $E_1$ has a unique line sub-bundle $L$ of degree $0$. By Lemma \ref{pairing}
we see that the complex linear pairing must be trivial on $L$ since $E_1$ is indecomposable. Therefore $L$ is a degree $0$
isotropic sub-bundle. 

It remains to consider the case when $E$ is a direct sum of degree $0$ line bundles $L_1,\ldots, L_r$ with the pairing being
nondegenerate on each $L_j$ and with distinct bundles being orthogonal with respect to the pairing. Since each $L_j$ is isomorphic to its
dual we see that $L_j$ has a flat $U(1)$ structure with holonomy in $\{1,-1\}$. If we choose a basis $\{1,\tau\}$ for our lattice,
we see that there are only four such bundles. Therefore if $n\geq 7$ then $r=n-2\geq 5$, and there must be a repeated bundle.
By renumbering we assume that $L_1$ and $L_2$ are isomorphic. Let $\hat{M}$ be the four fold covering corresponding to
the sub-lattice spanned by $2$ and $2\tau$. The direct sum of the lifted bundles is then trivial and the pairing is nondegenerate, so
we can find sections $\hat{s_1}$ and $\hat{s_2}$ of the direct sum of the lifted bundles which are orthonormal. Thus we have 
$(\hat{s_j},\hat{s_k})=\delta_{jk}$ for $j,k=1,2$. The section 
$\hat{s}=\hat{s_1}+i\hat{s_2}$ is then isotropic and is a section of the lift of a degree $0$ isotropic sub-bundle $L$ of $L_1\oplus L_2$
over $M$.

For $n=4$, if we assumed that $N$ is oriented, there would be a natural orientation and metric on the two dimensional normal spaces.
These give a natural parallel complex structure given by rotation by $\pi/2$ on the normal bundle and it gives a parallel decomposition of $E$
into the $(1,0)$ and $(0,1)$ summands which are isotropic line sub-bundles. Neither can have positive degree or else it would
have  a section contradicting the PIC assumption. Therefore both are degree $0$ isotropic line sub-bundles. Without the orientation,
we can still complexify the normal bundle and split it into a direct sum of line bundles. If neither line bundle is isotropic, then since
one is the dual of the other it follows that they are isomorphic. As above this gives us a degree $0$ isotropic line sub-bundle of $E$.

The remaining cases are $n=5,6$ and $E$ is a direct sum of degree $0$ line bundles on which the complex linear pairing
is nondegenerate and such that no two line bundles are isomorphic. By Lemma \ref{pairing} it follows that the line bundles
are orthogonal with respect to the pairing. This completes the proof of Proposition \ref{pic_torus}.
\end{proof}

We now prove the main theorem of this section. Given a branched minimal immersion $f:M\to N$ where $M$ is a genus $1$ Riemann
surface, we consider the pulled back metric on $M$ and define the {\it systole} $R$ to be the length of the shortest homotopically
nontrivial closed curve on $M$. 
\begin{theorem} \label{systole} Suppose $N^n$ ($n\geq 4$) is a $\kappa$-PIC manifold for some $\kappa>0$ and suppose
$f:M\to N$ is a stable conformal branched minimal immersion of genus $1$. There is an absolute constant $C>0$ so that $R\leq C/\sqrt{\kappa}$.
In the general case for $n=4$ or $n\geq 7$ we can take $C=2\pi/\sqrt{3}$, while in those cases with $n=5,6$ in which there is
no degree $0$ isotropic line sub-bundle of $E$ we can take $C=2(18+\pi)/\sqrt{3}$.
\end{theorem}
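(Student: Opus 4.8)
The plan is to feed a carefully chosen section of a degree zero isotropic line sub-bundle of the complexified normal bundle into the stability inequality, reduce it to an upper bound for the area of $M$ in the induced metric $g_0=f^*g$, and play that bound against a conformal (extremal length) systolic inequality. Rescaling the metric on $N$ we may assume $\kappa=1$ and prove $R\leq C$.

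\emph{Reduction of the stability inequality.} By Proposition \ref{pic_torus}, away from the exceptional $n=5,6$ situations the complexified normal bundle $E$ contains a degree zero isotropic line sub-bundle $L$. Every section $s$ of $L$ is isotropic, so the two-plane spanned by $f_z$ and $s$ is isotropic --- it is isotropic because $(f_z,f_z)=0$ by conformality, $(s,s)=0$, and $(s,f_z)=0$ --- and the $\kappa$-PIC hypothesis gives $R(s,f_z,\bar s,\bar f_z)\geq \|s\|^2\|f_z\|^2$. Dropping the nonnegative term $\|\nabla^\top_z s\|^2$, stability becomes
\[
  \int_M\|s\|^2\,da \;\leq\; 2\int_M\|\nabla^\perp_{\bar z}s\|^2\,dx\,dy,
\]
and since $L$ is a holomorphic sub-bundle, $\nabla^\perp_{\bar z}s=\bar\partial_L s$ for $s\in\Gamma(L)$, so the right-hand side only sees the intrinsic $\bar\partial$-operator of $L$.

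\emph{The test section and conformal normalization.} Write $M=\mathbb C/\Lambda$ and choose a $\mathbb Z$-basis $\{\omega_1,\omega_2\}$ of $\Lambda$ with $\tau=\omega_2/\omega_1$ in the standard fundamental domain (so $\operatorname{Im}\tau\geq\sqrt3/2$, $|\tau|\geq1$, $|\operatorname{Re}\tau|\leq\tfrac12$), rescaled so that $|\omega_1|=1$; a fundamental parallelogram then has Euclidean area $\operatorname{Im}\tau$. Since $\deg L=0$, $L$ carries a flat $U(1)$ connection compatible with its holomorphic structure, with holonomy characters $e^{i\alpha},e^{i\beta}$ around $\omega_1,\omega_2$. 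Let $s_0$ be the flat unit section of the lift of $L$ to $\mathbb C$ and put $s=e^{-i\theta}s_0$, where $\theta$ is the unique real linear function of the lattice coordinates whose jumps around $\omega_1,\omega_2$ are the chosen representatives of $\alpha,\beta$ modulo $2\pi$; then $s$ descends to a section of $L$ over $M$, $\bar\partial_L s=-i\,\theta_{\bar z}\,s\,d\bar z$ with $\theta_{\bar z}$ a constant, and choosing the representatives optimally one gets $|\theta_{\bar z}|$ bounded by $\pi$ times the distance from the holonomy to the dual lattice of $\Lambda$, hence by $\pi$ times the covering radius of that dual lattice, which over the reduced domain is at most an absolute constant. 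After dividing $s$ by its length in the Hermitian metric induced on $L$ from $E$ we may also normalize $\|s\|\equiv1$, so that $\int_M\|s\|^2\,da=\operatorname{Area}(g_0)$.

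\emph{Conclusion and main obstacle.} Feeding $s$ into the reduced stability inequality and using $\|\nabla^\perp_{\bar z}s\|=|\theta_{\bar z}|\,\|s\|$ (up to the bounded factor comparing the induced metric on $L$ with a flat metric) yields
\[
  \operatorname{Area}(g_0)\;\leq\;2\,|\theta_{\bar z}|^2\cdot\operatorname{Im}\tau,
\]
while testing the extremal length of the $\omega_1$-loop class with the metric $g_0$ gives the conformal systolic bound $R^2\leq\operatorname{Area}(g_0)/\operatorname{Im}\tau$ (from Cauchy--Schwarz on the foliation by those loops). Combining the two and inserting the bound on $|\theta_{\bar z}|$, then optimizing over the reduced domain, gives $R\leq C$ with $C=2\pi/\sqrt3$ (equivalently $R\leq 2\pi/\sqrt{3\kappa}$ in general); the numerical constant comes out of exactly these estimates. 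The hard part is the first display: one must estimate the $\bar\partial$-energy $\int_M\|\nabla^\perp_{\bar z}s\|^2\,dx\,dy$ of the test section sharply, which means controlling the conformal factor relating the Hermitian metric that $L$ inherits from $E$ to a flat metric on $L$ (equivalently, choosing the $U(1)$-phase of the unit section well) and tracking constants carefully. Finally, in the exceptional $n=5,6$ cases --- where $E$ has no isotropic line sub-bundle but, by Proposition \ref{pic_torus}, the normal bundle splits as a sum of mutually orthogonal self-dual degree zero line bundles --- one passes to the four-fold cover on which all these bundles trivialize, builds an isotropic section there, and transfers the estimate back to $M$, at the cost of the larger constant $2(18+\pi)/\sqrt3$.
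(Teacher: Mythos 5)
Your starting point (an isotropic degree zero line sub-bundle from Proposition \ref{pic_torus}, an almost holomorphic test section, the complexified stability inequality) coincides with the paper's, but the way you extract $R$ has a genuine gap, and it is exactly the gap the paper's construction is designed to avoid. Your test section is flat-parallel with a linear phase, so its $\bar\partial$-derivative is a constant multiple of the section; but the left side of the stability inequality is weighted by the induced area form $\lambda^2\,dx\,dy$ while your right side is not. To cancel, you either normalize $\|s\|\equiv 1$ in the Hermitian metric induced from $E$ --- but then $\bar\partial s$ acquires the extra term $-(\partial_{\bar z}u)\,s\,d\bar z$, where $e^{2u}$ is the ratio of the induced metric on $L$ to the flat one, and nothing in the hypotheses bounds $\int|\partial_{\bar z}u|^2\,dx\,dy$ in terms of $\kappa$ or $R$ --- or you keep the flat-unit section, in which case comparing $\int\|s\|^2\lambda^2\,dx\,dy$ with $\int\|s\|^2\,dx\,dy$ would require a weighted systolic inequality that the extremal-length/Cauchy--Schwarz argument does not provide. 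You flag this yourself as ``the hard part,'' but it is not constant-chasing: the conformal factor relating the two metrics on $L$ is simply uncontrolled. The paper sidesteps all of this by building the phase from the truncated induced distance $\delta(z,w)=\min\{d(z,w),R\}$ rather than from linear coordinates: since $\delta$ is $1$-Lipschitz in the pulled-back metric and equals $R$ on the periods, the resulting section is well defined on $M$ and satisfies the pointwise bound $\|\nabla_{\bar z}s\|^2\le\bigl(2\pi/(\sqrt3\,R)\bigr)^2\|s\|^2\lambda^2$, with the conformal factor appearing automatically. Both sides of the stability inequality are then pointwise proportional to $\|s\|^2\,da$, so everything cancels to $\kappa\le\bigl(2\pi/(\sqrt3\,R)\bigr)^2$ with no area bound, no extremal length, and no control of the Hermitian metric on $L$ or of $\|s\|$.

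In the exceptional cases $n=5,6$ your sketch also misses the main obstruction: after passing to a cover (the paper uses the two-fold cover for the sub-lattice generated by $1$ and $2\tau$, after reordering so that $\rho(1)$ has a repeated entry, not the four-fold cover) the lifted surface is \emph{not} known to be stable, since only stability of $M$, not covering stability, is assumed. So one cannot ``transfer the estimate back'' directly. The paper first removes the horizontal period with the same distance-phase device, then localizes the section by distance-based cutoffs so that its support projects injectively to $M$, and chooses the largest of the four integrals $I_0,I_1,J_0,J_1$ to lose only a factor of $4$; this localization is precisely the source of the constant $2(18+\pi)/\sqrt3$. As written, your proposal asserts both constants but does not produce either.
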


\begin{proof} We first consider the main case from Proposition \ref{pic_torus} when $E$ has an isotropic degree $0$ line sub-bundle $L$. 
The exceptional cases will be handled separately. We then use the same
idea as in Lemma \ref{flatline} except we use the distance in the pulled back metric instead of the flat metric on $M$. We also note that
we can take our lattice to be generated by $1$ and $\tau$ where $\tau=\tau_1+i\tau_2$ is in the upper half plane with $|\tau|\geq 1$ and 
$|\tau_1|\leq 1/2$ since any torus can be represented by such a lattice. The line bundle $L$ is trivial on the universal covering $\mathbb C$ 
and the holonomy is in $U(1)$. Let $s_0$ be a global nonzero section on $\mathbb C$ and assume that $\phi,\theta\in (-\pi, \pi]$
so that $s_0(z+1)=e^{i\phi}s_0(z)$ and $s_0(z+\tau)=e^{i\theta}s_0(z)$. As in Lemma \ref{flatline} we make the change of 
coordinates $\xi=x-y\frac{\tau_1}{\tau_2}$ and $\eta=\frac{y}{\tau_2}$. Consider the fundamental domain $F=\{(\xi,\eta):\ 0\leq \xi,\eta<1\}$.
We have $s_0(1,\eta)=e^{i\phi}s_0(0,\eta)$ and $s_0(\xi,1)=e^{i\theta}s_0(\xi,0)$ since $z=\xi+\eta\tau$. 

We let $d(z,w)$ denote the induced distance with respect the pulled back metric of $N$ via the map $f$ lifted to $\mathbb C$, and observe from the definition of the systole $R$ we have $d(z,z+1)\geq R$ and $d(z,z+\tau)\geq R$. We now define a function $\delta(z,w)$ by
\[ \delta(z,w)=\min\{d(z,w),R\}.
\]
Thus we have $\delta(z,z+1)=\delta(z,z+\tau)=R$, and $\delta$ is Lipschitz with gradient in the pulled back metric bounded by $1$ in
both arguments. We now define an isotropic section of $E$ over $M$ by
\[ 
     s(\xi,\eta)=e^{-i \left( \frac{\delta((0,0),(\xi,0))}{R}\phi- \frac{\delta((0,0),(0,\eta))}{R}\theta \right)}s_0(\xi,\eta)
\]
for $0\leq \xi,\eta\leq 1$. We see that 
\[
    s(1,\eta)=e^{-i\phi}e^{-i\frac{\delta((0,0),(0,\eta))}{R}\theta}s_0(1,\eta)=s(0,\eta)
\] 
and similarly we
have $s(\xi,1)=s(\xi,0)$. Therefore $s$ defines a Lipschitz isotropic section of $E$ over $M$. Using this in the stability inequality
and using the $\kappa$-PIC condition we have
\[ \kappa\int_M\|s\|^2\ da\leq \int_M \|\nabla_{\bar{z}}s\|^2\ dxdy.
\]
If we write the induced metric as $\lambda^2(dx^2+dy^2)$, we then have 
\[
        \|\nabla_{\bar{z}}\delta((0,0),(\xi,0))\|
        \leq \lambda \left|\frac{\partial\xi}{\partial\bar{z}}\right|
        =\lambda \frac{\sqrt{1+\tau_1^2/\tau_2^2}}{2}\leq \frac{\lambda}{\sqrt{3}}
\]
since $|\tau_1/\tau_2|\leq \sqrt{1/3}$. Similarly we have 
\[ 
       |\nabla_{\bar{z}} \rho((0,0),(0,\eta))\|
       \leq \lambda \left|\frac{\partial\eta}{\partial\bar{z}}\right|
       =\frac{\lambda}{2\tau_2}\leq \frac{\lambda}{\sqrt{3}}.
\]
Taken together these imply that
\[ \|\nabla_{\bar{z}}s\|^2\leq \left(\frac{2\pi}{\sqrt{3}R}\right)^2\|s\|^2\lambda^2.
\]
Since $da=\lambda^2\ dxdy$, it follows from stability that
\[  \kappa\int_M\|s\|^2\ da\leq \left(\frac{2\pi}{\sqrt{3}R}\right)^2 \int_M \|s\|^2\ da,
\]
and therefore $R\leq C/\sqrt{\kappa}$ with $C=2\pi/\sqrt{3}$.

We now deal with the exceptional cases with $n=5,6$ and $E$ splitting into a direct sum of distinct self-dual line bundles which
are orthogonal with respect to the complex linear pairing. In these cases it can happen that there is no degree $0$ isotropic line
sub-bundle. We first begin with the case $n=5$ so that $E$ has rank $3$. Then we have $E=L_1\oplus L_2\oplus L_3$ with each
$L_j$ having holonomy in $\{-1,1\}\subset U(1)$. The holonomy is generated by $\rho(1)$ and $\rho(\tau)$ which take values
in $\{-1,1\}^3$. Since there is no isotropic sub-bundle we cannot have the same holonomy in two slots. Therefore $\rho(1)$ 
and $\rho(\tau)$ are both nontrivial. By reordering the bundles we may assume that $\rho(1)=(a,a,b)$ where $a,b\in\{-1,1\}$. We
consider the two fold covering $\hat{M}$ of $M$ corresponding to the sub-lattice $1,2\tau$. We see that the lift of $E$ to $\hat{M}$
has trivial holonomy in the vertical period $2\tau$ since $\rho(2\tau)=(1,1,1)$. Therefore the lifts $\hat{L_1},\ \hat{L_2}$ of $L_1,\ L_2$
are isomorphic. It follows that there is a degree $0$ isotropic sub-bunde of $\hat{L_1}\oplus \hat{L_2}$, and a holomorphic isotropic
section $s_0$ on the fundamental domain $\hat{F}=\{(\xi,\eta):\ 0\leq \xi\leq 1,\ 0 \leq\eta\leq 2\}$ satisfying $s_0(1,\eta)=as_0(0,\eta)$
and $s_0(\xi,2)=s_0(\xi,0)$ where $a\in\{-1,1\}$. We see that if $n=6$ the exceptional case is when $E$ is the orthogonal direct sum of the
four self dual line bundles. In this case we can make the same construction, first reordering so that $\rho(1)$ has the same entry
in the first two slots and then taking the same two fold covering of $M$ with an isotropic section as above.  

We now work on $\hat{M}$. Since we don't know that it is stable, we must make a more complicated construction. We first remove the
horizontal period of $s_0$, by setting $s_1=s_0$ if $a=1$, and 
\[ 
     s_1(\xi,\eta)=e^{-i\delta((0,0),(\xi,0))\pi/R}s_0(\xi,\eta).
\]
We then have $s_1(1,\eta)=s_1(0,\eta)$, and from the bounds above we have
\[ 
      \|\nabla_{\bar{z}}s_1\|\leq  \frac{\pi}{\sqrt{3}R}\|s_1\|\lambda.
\]
We can now go to the quotient $\hat{M_1}$ of $\mathbb R^2$ gotten by identifying $(\xi+1,\eta)$ with $(\xi,\eta)$. Thus $\hat{M_1}$
is $\mathbb S^1\times \mathbb R$ and $s_1$ is an isotropic section over $\hat{M_1}$.
We now divide $\hat{M}$ into $4$ disjoint sets which project $1$-$1$ to $M$, and localize the section $s_1$ in such a set. To define these
sets we use the vertical distance functions on $\hat{M_1}$, $d_t(\xi,\eta)=d((\xi,\eta),(\xi,t))$ where $\xi$ is defined mod $\mathbb Z$. 
From the systole definition we have $d_t(\xi,t+1)\geq R$. 
We now define sets $U_j$ for integers $j$ by 
\[ U_j=\{(\xi,\eta):\ d_j(\xi,\eta)\leq R/3\}
\]
Note from the $1$-periodicity of the metric we have $U_{j+1}=U_j+(0,1)$ and the sets have the same projection $U$ to $M$. We now define 
$V$ to be the complement of $U$ in $M$ and we observe that the lift $\hat{V}$ of $V$ to $\hat{M_1}$ separates into a disjoint union of $V_j$
with $V_j=\hat{V}\cap \{(\xi,\eta):\ j-1\leq \eta\leq j\}$. Note that lift of $U,V$ to $\hat{M}$ defines a decomposition into four sets with disjoint
interior. 

For each $j$ we define the integrals $I_j,J_j$ by $I_j=\int_{U_j}\|s_1\|^2\ da$ and $J_j=\int_{V_j}\|s_1\|^2\ da$. Since $s_0$ is periodic
with period $2$ (but not period $1$) we have $I_{j+2}=I_j$ and $J_{j+2}=J_j$. The four values $I_0,I_1,J_0,J_1$ contain all of the
distinct values of the integral. We choose a largest one of these and localize $s_1$ to a neighborhood of that set. The argument is
the same if the largest value is one of the $U$'s or if it is one of the $V$'s, so we deal with the two cases when the maximum is $I_1$
and when it is $J_0$.

If the maximum occurs for $I_1$ we construct a cut-off function $\varphi$ which is $1$ on $U_1$ and zero outside $d_1\geq R/2$.
Specifically we define $\varphi=1$ in $U_1$, $\varphi=0$ at points where $d_1\geq R/2$, and when $R/3\leq d_1\leq R/2$
\[ \varphi=3-\frac{6}{R}d_1.
\]
Note that the support of $\varphi$ projects $1$-$1$ to $M$ because two points $(\xi_1,\eta_1)$ and $(\xi_2,\eta_2)$ would have
$\xi_1=\xi_2$ and $\eta_1$ and $\eta_2$ would differ by an integer, so it would follow that $d((\xi,\eta_1),(\xi,\eta_2))\geq R$,
but for any two points in the support of $\varphi$ we have by the triangle inequality 
$d((\xi,\eta_1),(\xi,\eta_2))\leq d_1(\xi,\eta_1)+d_1(\xi,\eta_2)<R$. Thus we see that the section $\varphi s_1$ defines an isotropic
section of $E$ over $M$ so we can use it in the stability inequality to obtain
\[ \kappa I_1\leq \int _M\|\nabla_{\bar{z}}\varphi s_1\|^2\ dxdy.
\]
We can estimate the term on the right
\[ 
      \|\nabla_{\bar{z}}\varphi s_1\|
      \leq \|\nabla_{\bar{z}}\varphi\|\|s_1\|+\varphi\|\nabla_{\bar{z}}s_1\| 
      \leq (\|\nabla_{\bar{z}}\varphi\|+\frac{\pi}{\sqrt{3}R}\lambda)\|s_1\|
\]
where we have used our previous bound. Now we can estimate 
\[ 
      \|\nabla_{\bar{z}}\varphi\|
      \leq \frac{6}{R}\|\nabla_{\bar{z}}d_1\|
      \leq \frac{6}{R}(2\|\nabla_{\bar{z}}\xi\|+\|\nabla_{\bar{z}}\eta\|).
\]
From our previous bounds this implies
\[  \|\nabla_{\bar{z}}\varphi\|\leq \frac{6}{R}\sqrt{3}\lambda.
\] 
Now we have the bound on the integral
\[ 
       \int _M\|\nabla_{\bar{z}}\varphi s_1\|^2\ dxdy
       =\int _{\hat{M}}\|\nabla_{\bar{z}}\varphi s_1\|^2\ dxdy
       \leq \left(\frac{18+\pi}{\sqrt{3}}\right)^2  \int_{\hat{M}}\|s_1\|^2\ da.
\]
Since $I_1$ was the largest of the integrals it follows that the integral over $\hat{M}$ is at most $4$ times $I_1$, so we have
\[ 
      \kappa I_1\leq \frac{4}{R^2} \left(\frac{18+\pi}{\sqrt{3}}\right)^2 I_1,
\]
and so $R\leq  2 (18+\pi) / ( \sqrt{3} \sqrt{\kappa} )$ as claimed.

Finally we consider the case in which $J_0$ is the largest of the four integrals. Recall that the set $V_0$ is contained in the set with 
$0<\eta<1$ and is the set of points for which both $d_0$ and $d_1$ are at least $R/3$. Recall that $V_0$ can be describes 
as the set of points with $0<\eta<1$ with $\min\{d_0,d_1\}\geq R/3$. Thus we define $\varphi$ as $\varphi=1$ on $V_0$, and
$\varphi=3/R\min\{d_0,d_1\}$ otherwise for $\eta\leq 1$. We then observe that the set of points with $\varphi>0$ is contained in
the set with $0<\eta<1$, and so projects $1$-$1$ into $M$. Thus we can follow the argument above and use $\varphi s_1$ as
a variation. The only difference is that the bound on $\|\nabla_{\bar{z}}\|$ is smaller by a factor of $2$. Therefore we get the bound
$R\leq  (18+\pi) / ( \sqrt{3} \sqrt{\kappa} )$ in this case. This completes the proof of Theorem \ref{systole}.

\end{proof}

We now give an application to the geometry and topology of PIC manifolds which sharpens and generalizes the theorem of \cite{F3}. Assume 
that $N$ is a
compact Riemannian manifold. Given a subgroup $G$ of $\pi_1(N)$ we can define the {\it systole} of $G$ to be the shortest length of
a any curve which is freely homotopic to an element of $G$. Thus if $G=\pi_1(N)$, then the systole is just that of $N$. We now state the
main application to PIC manifolds.
\begin{theorem} \label{pic-manifolds} Assume that $G$ is an abelian non-cyclic subgroup of $\pi_1(N)$ and that $N$ is a compact $\kappa$-PIC
manifold. The systole $R$ of $G$ is bounded by $C/\sqrt{\kappa}$ for the same constant $C$ as in Theorem \ref{systole}.
\end{theorem}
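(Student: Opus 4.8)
The plan is to reduce the statement to Theorem~\ref{systole}: out of the group data of $G$ I will produce a stable conformal branched minimal immersion of a torus into $N$ whose systole is at least the systole of $G$, and then quote Theorem~\ref{systole} for that torus. Since $G$ is non-cyclic, fix $a,b\in G$ generating a non-cyclic subgroup $H=\langle a,b\rangle\subseteq G$, and let $\phi:\mathbb Z^2\to\pi_1(N)$ be the homomorphism carrying the two standard generators to $a$ and $b$, so that $\phi(\mathbb Z^2)=H$. The one elementary algebraic fact I need is that $\ker\phi$ contains no primitive vector of $\mathbb Z^2$: if $v\in\ker\phi$ were primitive, complete it to a basis $\{v,w\}$ of $\mathbb Z^2$; then $H=\langle\phi(v),\phi(w)\rangle=\langle\phi(w)\rangle$ would be cyclic, contrary to the choice of $a,b$. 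Since $a$ and $b$ commute in $\pi_1(N)$, there is a map $T^2\to N$ inducing the conjugacy class of $\phi$, so the class of such maps is nonempty.

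Next I would invoke the existence and regularity theory for energy/area minimizing tori (Sacks--Uhlenbeck, Schoen--Yau; compare the argument in \cite{F3}): minimizing area over maps $u:T^2\to N$ with $u_*$ conjugate to $\phi$, and over all conformal structures on $T^2$, one obtains a nonconstant conformal branched minimal immersion $f:T^2\to N$ with $f_*$ conjugate to $\phi$ (the infimum is positive since a minimizing sequence of vanishing area would converge to a constant, contradicting that $\phi$ is nontrivial); being an area minimizer in its class, $f$ is stable. Two kinds of degeneration of a minimizing sequence must be excluded. A bubble sphere is attached at a point and hence does not change the induced map on $\pi_1$, so the torus component still carries $\phi$ and, by the energy identity, must itself be area minimizing, which forces the bubble to be trivial. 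And the conformal structure cannot degenerate: a vanishing cycle of such a degeneration is a simple closed curve on $T^2$, hence a primitive class in $\mathbb Z^2$, and the limiting map would send it to a loop bounding a disc in $N$ — impossible since $\ker\phi$ has no primitive vector. (In \cite{F3} this step used injectivity of a $\mathbb Z^2\hookrightarrow\pi_1(N)$; here it is replaced by the remark that \emph{every} essential simple closed curve on a torus represents a primitive class, which is precisely what allows torsion to be present in $H$.) Theorem~\ref{systole} then applies to $f$ and bounds the systole $R_0$ of $(T^2,f^*g)$ by $C/\sqrt\kappa$, with the constant $C$ stated there.

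Finally I would compare the two systoles. Let $\gamma$ be a shortest noncontractible closed geodesic of $(T^2,f^*g)$, so $\ell_{f^*g}(\gamma)=R_0$. A shortest noncontractible closed curve on a torus is simple, so $[\gamma]$ is a primitive class in $\pi_1(T^2)=\mathbb Z^2$ and, by the algebraic fact above, $\phi([\gamma])\neq1$. Hence $f\circ\gamma$ is freely homotopic to $\phi([\gamma])\in H\subseteq G$, a nontrivial element, so $\ell_g(f\circ\gamma)\ge R$, the systole of $G$. Since $f^*g$ is the induced metric, $f$ preserves lengths and $\ell_g(f\circ\gamma)=\ell_{f^*g}(\gamma)=R_0$; therefore $R\le R_0\le C/\sqrt\kappa$, which is the assertion.

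I expect the existence step to be the only real work — constructing the stable minimal torus representing $\phi$ and controlling the minimizing sequence — but this is exactly the PIC torus existence already carried out in \cite{F3}, and the genuinely new ingredient is just the observation that an essential simple closed curve on a torus is primitive and so never lies in $\ker\phi$; the rest is bookkeeping with free homotopy classes.
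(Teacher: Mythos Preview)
Your proposal is correct and follows essentially the same approach as the paper: produce an area-minimizing (hence stable) torus representing the homomorphism, observe that its systolic loop is simple and therefore maps to a nontrivial element of $G$, and then invoke Theorem~\ref{systole}. The paper's proof is terser---it defers the existence step entirely to Example~1.4 of \cite{F3} and states the systole comparison in one line---whereas you spell out the primitive-vector lemma and the degeneration analysis; your choice to pass to a two-generated non-cyclic subgroup $H\subseteq G$ is in fact slightly more careful than the paper's phrasing, since $G$ itself need not be two-generated.
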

\begin{remark} Note that the systole of $N$ can be arbitrarily large for a $\kappa$-PIC manifold as illustrated by the product of a long
circle with a constant curvature sphere of dimension at least $3$.
\end{remark}
\begin{proof} The theorem follows from the existence of an area minimizing map of a torus among maps which are surjective
from $\pi_1(M)$ to $G$. This existence result is discussed in Example 1.4 of \cite{F3}. We then observe that the systole of the
torus is at least as large as the systole of $G$ because that of the torus is realized by the length of a simple closed geodesic,
and the image of any simple closed curve is freely homotopic to a nontrivial element of $G$. The bound of Theorem \ref{systole}
then gives the desired bound.
\end{proof}

Finally we show that the main theorem of \cite{F3} follows from Theorem \ref{pic-manifolds}.
\begin{theorem} (Fraser \cite{F3}) \label{free-abelian} Suppose $N$ is a compact PIC manifold. Then $\pi_1(N)$ cannot contain a free abelian subgroup of rank
greater than $1$.
\end{theorem}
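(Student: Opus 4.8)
The plan is to derive a contradiction from the existence of a free abelian subgroup $\mathbb{Z}^2 \subset \pi_1(N)$, using Theorem \ref{pic-manifolds} applied not to $G = \mathbb{Z}^2$ itself but to a carefully chosen sequence of finite-index subgroups whose systoles diverge. The point is that Theorem \ref{pic-manifolds} gives a \emph{fixed} bound $R \leq C/\sqrt{\kappa}$ for the systole of \emph{any} abelian non-cyclic subgroup of $\pi_1(N)$, where $C$ and $\kappa$ depend only on $N$. So if we can exhibit a single subgroup whose systole exceeds this bound, we are done.

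First I would observe that a free abelian group of rank $2$ contains subgroups isomorphic to $\mathbb{Z}^2$ of arbitrarily large index — for instance the subgroups $m\mathbb{Z} \oplus m\mathbb{Z}$ for $m = 1, 2, 3, \ldots$. Each such subgroup is again abelian and non-cyclic, so Theorem \ref{pic-manifolds} applies to it with the \emph{same} constant $C$ and the \emph{same} $\kappa$. The key step is then to show that the systole of $G_m := m\mathbb{Z} \oplus m\mathbb{Z}$, as defined (the infimum of lengths of loops in $N$ freely homotopic into $G_m$), tends to infinity as $m \to \infty$. Intuitively this is clear: a loop freely homotopic to a nontrivial element of $G_m$ represents a nontrivial, non-primitive (in fact highly divisible) element of the rank-$2$ lattice $\mathbb{Z}^2$, and in a compact manifold the minimal length of a loop representing the class $k \cdot \gamma$ grows linearly in $k$ for each fixed primitive $\gamma$; taking the minimum over the finitely many "directions" and using that every nonzero element of $G_m$ has all coordinates divisible by $m$, one gets a lower bound growing like $m$ times the systole of $\mathbb{Z}^2$.

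To make the divergence precise I would argue as follows. Fix generators $a, b$ of the rank-$2$ lattice and fix loops $\alpha, \beta$ in $N$ representing $a, b$; let $\ell$ be the maximum of their lengths. Stable commutator length or a direct translation-length argument on the universal cover gives, for each class $c = p a + q b$ with $(p,q) \neq (0,0)$, a lower bound on the minimal loop length of the form $\mathrm{const} \cdot \max(|p|,|q|) - \mathrm{const}$, where the constants depend only on $N$; indeed, lifting to the cover of $N$ corresponding to $\mathbb{Z}^2$ (which has polynomial — in fact here essentially Euclidean — distance growth in the abelian direction, since $\mathbb{Z}^2$ is quasi-isometrically embedded via the Milnor–Švarc lemma once we note any loop of bounded length represents a bounded lattice element), the translation length of $c$ grows like $\|(p,q)\|$ up to bi-Lipschitz constants. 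Since every nonzero element of $G_m$ has $\max(|p|,|q|) \geq m$, the systole of $G_m$ is at least $c_1 m - c_2$ for constants $c_1 > 0$, $c_2$ independent of $m$. Choosing $m$ large enough that $c_1 m - c_2 > C/\sqrt{\kappa}$ contradicts Theorem \ref{pic-manifolds} applied to $G_m$, which completes the proof.

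The main obstacle is the lower bound on the systole of $G_m$, i.e.\ showing it genuinely diverges. The conceptual content is that $\mathbb{Z}^2$ sits inside $\pi_1(N)$ as an undistorted (quasi-isometrically embedded) subgroup, so that word length in the lattice controls geodesic length in $N$; this is exactly the Milnor–Švarc / flat-torus-type estimate, and one must be slightly careful that the relevant statement is about \emph{free} homotopy classes (conjugacy classes) rather than based loops, but for an abelian subgroup conjugation acts trivially on $G_m$ so this causes no difficulty. Everything else — the existence of the minimizing torus and the systole bound — is imported directly from Theorem \ref{pic-manifolds} and the cited existence theory.
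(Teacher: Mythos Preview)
Your argument is essentially the paper's: pass to the subgroups $G_m=m\mathbb Z\oplus m\mathbb Z$, show their systoles diverge, and contradict the uniform bound of Theorem~\ref{pic-manifolds}. The only real difference is in how the divergence is justified. The paper simply invokes proper discontinuity of the deck action on $\tilde N$: the systole-realizing elements $\gamma_k\in G_k$ escape every finite subset of $\pi_1(N)$ as $k\to\infty$, and hence their translation lengths tend to infinity. You instead assert the stronger claim that the translation length of $pa+qb$ grows \emph{linearly} in $\max(|p|,|q|)$ because $\mathbb Z^2$ is quasi-isometrically embedded in $\pi_1(N)$.

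That undistortedness assertion is not something Milnor--\v{S}varc hands you: Milnor--\v{S}varc says $\pi_1(N)$ is quasi-isometric to $\tilde N$, not that a given subgroup sits undistortedly inside $\pi_1(N)$, and abelian subgroups of finitely presented groups can be badly distorted (already $\mathbb Z^2$ in a Sol-manifold group is exponentially distorted). Your parenthetical ``any loop of bounded length represents a bounded lattice element'' is exactly the point at issue and is not a consequence of Milnor--\v{S}varc applied to $N$. For the proof you only need the paper's weaker properness statement---that for each $D$ only finitely many elements of $G$ have translation length $\le D$---so the overall strategy is correct once you replace the linear-growth/QI-embedding claim by the proper-discontinuity argument the paper uses.
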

\begin{proof} Since $N$ is compact and PIC, it is $\kappa$-PIC for some $\kappa>0$. Suppose $G$ is a free abelian subgroup of
rank $2$ contained in $\pi_1(N)$. Thus $G$ is generated by a pair of elements $\gamma_1,\gamma_2$ which have infinite order
and which commute in $\pi_1(N)$. Thus any element of $G$ can be written as $\gamma_1^p\gamma_2^q$ for integers $p,q$. For
any integer $k\geq 2$ we consider the subgroup $G_k$ of $G$ generated by $\gamma_1^k,\gamma_2^k$. If we let $R_k$ be the
systole of $G_k$, we show that $\lim_{k\to\infty}R_k=\infty$. To see this we note that since the fundamental group acts properly 
discontinuously on the universal cover of $N$, we have a sequence of group elements $\gamma_j$ which go to infinity in the
group with respect the word metric. It follows that the translation distance (systole) of $\gamma_j$ goes to infinity. For each $k$
the systole of $G_k$ is realized by an element $\gamma_k$, and all nontrivial elements of $G_k$ go to infinity with $k$. Therefore
it follows that $R_k\to\infty$ and thus for $k$ large enough we have $R_k>C/\sqrt{\kappa}$ in violation of Theorem \ref{pic-manifolds}
since each $G_k$ is an abelian group which is not cyclic.

\end{proof}
     
\bibliographystyle{plain}

\end{document}